\DeclareMathAlphabet{\mymathbb}{U}{BOONDOX-ds}{m}{n}
\newtheoremstyle{estiloejemplo}%
    {9pt}{9pt}%
    {}%
    {0pt}%
    {\bfseries\scshape}{.}%
    { }%
    {}
\theoremstyle{estiloejemplo}
    \newtheorem{ejemplo}{Example}
    \newtheorem*{observacion}{Observation}
\newtheoremstyle{estiloteorema}%
    {9pt}{9pt}%
    {\slshape}%
    {0pt}%
    {\bfseries\scshape}{.}%
    { }%
    {}
\theoremstyle{estiloteorema}
    \newtheorem{definicion}{Definition}[chapter]
    \newtheorem{teorema}{Theorem}[chapter]
    \newtheorem{proposicion}[teorema]{Proposition}
    \newtheorem{lema}[teorema]{Lemma}
\newcommand{\funcion}[5]{%
    {\setlength{\arraycolsep}{2pt}
        \begin{array}{r@{}ccl}
        #1\colon & #2 & \longrightarrow & #3\\
        & #4 & \longmapsto & #5
        \end{array}}
    }
\newcommand{\func}[3]{%
    #1\colon  #2  \to  #3
}
\newcommand{\R}{\mathbb{R}}
\setlist[itemize,1]{label=$\bullet$}
\begin{document}

\frontmatter
\portadilla


\tableofcontents


\mainmatter
\addtolength{\abovedisplayskip}{-1mm}
\addtolength{\belowdisplayskip}{-1mm}

\chapter*{Abstract}
\addcontentsline{toc}{chapter}{Abstract}

The basic concepts of category theory are developed and examples of them are presented to illustrate them using measurement theory and probability theory tools. Motivated by Perrone's work~\cite{PerroneP} where notes on category theory are developed with examples of basic mathematics, we present the concepts of category, functor, natural transformation, and products with examples in the probabilistic context.

The most prominent examples of the application of Category Theory to Probability Theory are the Lawvere ~\cite{LawvereF} and Giry~\cite{GiryM} approaches. However, there are few categories with objects as probability spaces due to the difficulty of finding an appropriate condition to define arrows between them.

\include{Capitulos/01Introduccion}
\include{Capitulos/02Probabilidad}
\chapter{Category Theory}\label{cap:capitulo3}

\section{Categories}

\begin{definicion}[Category]\label{def:categoria}
    A \emph{category} $ \mathcal{C}$ consists of:
    \begin{itemize}
        \item 
           A collection of objects, denoted by $ \textbf{Ob}(\mathcal{C})$;
        \item
            A collection of arrows, denoted by $ \boldsymbol{\mathcal{A}} (\mathcal{C}) $;
        \item
           Each arrow is assigned two operations, called domain and codomain. The domain assigns each arrow $ f $ an object $ A = \text{dom} (f) $, and the codomain assigns each arrow $ f $ an object $ B = \text{cod} (f) $. We will use the notation
            \[
                \func{f}{A}{B}. 
            \]
        \item
            For each pair of arrows $ f, g \in \boldsymbol{\mathcal {A}} (C) $, such that $ \text{dom}(g) = \text{cod} (f) $, the operation composition of $ f $ and $ g $, denoted by $ g \circ f $, such that $ \func {g \circ f} {\text{dom} (f)}{\text{cod}(g)} $. If so, we say that $ f $ and $ g $ are composable arrows.
        \item
           Let $f,g,h\in\boldsymbol{\mathcal{A}}(C)$ arrows such that  $\text{dom}(g)=\text{cod}(f)$ and $\text{dom}(h)=\text{cod}(g)$, we have to 
            \[
                h\circ (g\circ f) = (h\circ g)\circ f,
            \]
           that is, the composition is associative.
        \item
           For each object  $A\in\textbf{Ob}(\mathcal{A})$, there is an arrow $\func{1_A}{A}{A}$,  which we call the identity arrow, such that if $\func{f}{A}{B}$ y $\func{g}{C}{A}$ are arrows, then it is satisfied that 
            \[
                f\circ 1_A = f \quad \text{and} \quad g\circ 1_A = g. 
            \]
    \end{itemize}
\end{definicion}

To illustrate the concept of category, we first present some classic examples of categories.

\begin{ejemplo}
    \begin{enumerate}
        \item 
            \textbf{Category one:} This category has an object and an arrow, which corresponds to the identity arrow of its only object.
            \begin{center}
                \begin{pspicture}[showgrid=false](0,0.2)(1,1)
                    \psdot(0.5,0.5)
                \end{pspicture}
            \end{center}
           We will denote it by \textbf{1}. 
        \item
           \textbf{Category two:} It is the category that has two objects, an identity arrow for each object, and a single arrow between the two objects. Its diagram is:
            \begin{center}
                \begin{pspicture}[showgrid=false](0.3,0.3)(1,1)
                    \psdots(0,0.5)(0.8,0.5)
                    \psline{->}(0.2,0.5)(0.6,0.5)
                \end{pspicture}
            \end{center}
            It is denoted by \textbf{2}.
        \item
            \textbf{Category three:} This category has three objects, three identity arrows for each object, exactly one arrow from first to second object, exactly one arrow from second to third object, and exactly one arrow from first to third object. Its diagram is given by:
            \begin{center}
                \psset{unit=0.8}
                \begin{pspicture}[showgrid=false](0,0)(3,3)
                    \pnode(3,3){A}
                    \pnode(0,3){B}
                    \pnode(3,0){C}
                    \psdots(3,3)(0,3)(3,0)
                    \ncline[nodesep=4pt]{->}{B}{A}
                    \ncline[nodesep=4pt]{->}{A}{C}
                    \ncline[nodesep=4pt]{->}{B}{C}
                \end{pspicture}
            \end{center}
            This category will be denoted by \textbf{3}. 
        \item
            \textbf{Sets category:} It is the category defined by:  
            \begin{itemize}
                \item 
                    \textbf{Objects}: Sets;
                \item
                    \textbf{Arrows}: Functions between sets; 
                \item
                    \textbf{Composition}: Usual composition of functions; 
                \item
                    \textbf{Identity:} Identity function defined on a set. 
            \end{itemize}
        \item
            \textbf{Preorders:} 
               Let $ X $ be a set and $ \leq $ a relation on $ X $, $ \leq $ is said to be a preorder if it is reflexive and transitive. The category \textbf{Pre} is defined by means of the following information: Its objects are the elements of $ X $, given two objects $ x, y \ in X $ there is a single arrow, if and only, if $ x \leq y $, the composition between two arrows is given by the transitivity and the identities for each object are given by the reflexivity.
    \end{enumerate}
\end{ejemplo}

Thanks to the fact that the composition of functions is measurable, we can define the category of measurable spaces.

\begin{ejemplo}
    \textbf {Category of measurable spaces:} The category \textbf{Meas} is defined whose objects are measurable spaces and whose arrows between measurable spaces are measurable functions.
\end{ejemplo}

Now, we introduce the category $ \chi $, defined in \cite{TakanoriA}.

\begin{definicion}[Category $\chi$]\label{def:categoriachi}
    Let $(X,\mathcal{F}_X)$ be a measurable space and $\chi:=\chi(X,\mathcal{F}_X)$ the set of all pairs of the form $(\mathcal{G},\mathbb{P}_X)$, where $\mathcal{G}\subset\mathcal{F}_X$ is a  sub$-\sigma-$algebra of $\mathcal{F}_X$ and $\mathbb{P}_X$ ess a probability measure defined on $(X,\mathcal{F}_X)$. For an element $\mathcal{U}\in\chi$, we denote its $\sigma-$áalgebra and its probability measure by  $\mathcal{F}_{\mathcal{U}}$ y $\mathbb{P}_{\mathcal{U}}$, respectively, that is, $\mathcal{U} = (\mathcal{F}_{\mathcal{U}},\mathbb{P}_{\mathcal{U}})$. For $\mathcal{U},\mathcal{V}\in\chi$, consider the binary relation, denoted by $\leq_{\chi}$, and defined as follows 
    \[  
        \mathcal{V}\leq_{\chi} \mathcal{U} \;\text{ si y solo si }\, \mathcal{F}_{\mathcal{V}} \subset \mathcal{F}_{\mathcal{U}} \;\text{ y }\; \mathbb{P}_{\mathcal{V}} \gg \mathbb{P}_{\mathcal{U}}.
    \]
    The category $ \chi $ is defined as one that has exactly one arrow $\func{f^{\mathcal{V}}_{\mathcal{U}}}{\mathcal{V}}{\mathcal{U}}$ in $\chi$ if and only if $\mathcal{V}\leq_{\chi}\mathcal{U}$. 
\end{definicion}

\begin{observacion}
        Note that $(\chi,\leq_{\chi})$ is a preordered set. Indeed, let $\mathcal{U}\in\chi$, the reflexivity is satisfied given that  $\mathcal{F}_{\mathcal{U}} \subseteq \mathcal{F}_{\mathcal{U}}$ and since $\mathbb{P}_{\mathcal{U}}$ is absolutely continuous with respect to itself. Thus, it remains to prove the transitivity, for this, let $\mathcal{U},\mathcal{V},\mathcal{W}\in\chi$, such that 
        \[
            \mathcal{V}\leq_{\chi} \mathcal{U} \quad\text{y}\quad\mathcal{U}\leq_{\chi} \mathcal{W}, 
        \]
       we are going to show that $\mathcal{V}\leq_{\chi} \mathcal{W}$. On the one hand, we have to 
        \[
            \mathcal{F}_{\mathcal{V}} \subset \mathcal{F}_{\mathcal{U}}\quad\text{y}\quad \mathcal{F}_{\mathcal{U}} \subset \mathcal{F}_{\mathcal{W}},  
        \]
        from where, $\mathcal{F}_{\mathcal{V}}\subset\mathcal{F}_{\mathcal{V}}$. 
        
        On the other hand, let  $A\in\mathcal{F}_{\mathcal{V}}$ such that $\mathbb{P}_{\mathcal{V}}(A) = 0$, we must prove that $\mathbb{P}_{\mathcal{W}}(A) = 0$. Since $\mathbb{P}_{\mathcal{U}} \ll \mathbb{P}_{\mathcal{V}}$, we have $\mathbb{P}_{\mathcal{U}}(A) = 0$, from where, using the fact that $\mathbb{P}_{\mathcal{W}} \ll \mathbb{P}_{\mathcal{U}}$, we obtain that $\mathbb{P}_{\mathcal{W}}(A) = 0$. 
        
        Therefore, the category $ \chi $ is well defined. We will return to this category later to present examples of other category theory concepts. 
\end{observacion}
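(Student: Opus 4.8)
The plan is to verify directly that the relation $\leq_{\chi}$ satisfies the two defining axioms of a preorder, namely reflexivity and transitivity. The key observation guiding the whole argument is that $\leq_{\chi}$ is the conjunction of two simpler relations on the data attached to each element of $\chi$: set inclusion of the sub-$\sigma$-algebras and domination of the probability measures. Since a conjunction of reflexive (respectively transitive) relations is again reflexive (respectively transitive), it suffices to check each component separately and then combine them.

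For reflexivity, I would fix $\mathcal{U}\in\chi$ and check that both constituent conditions hold with $\mathcal{V}=\mathcal{U}$. The inclusion $\mathcal{F}_{\mathcal{U}}\subset\mathcal{F}_{\mathcal{U}}$ is immediate, and $\mathbb{P}_{\mathcal{U}}\gg\mathbb{P}_{\mathcal{U}}$ holds because every measure is absolutely continuous with respect to itself: any $\mathbb{P}_{\mathcal{U}}$-null set is trivially $\mathbb{P}_{\mathcal{U}}$-null. Hence $\mathcal{U}\leq_{\chi}\mathcal{U}$.

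For transitivity, I would take $\mathcal{U},\mathcal{V},\mathcal{W}\in\chi$ with $\mathcal{V}\leq_{\chi}\mathcal{U}$ and $\mathcal{U}\leq_{\chi}\mathcal{W}$ and aim to produce $\mathcal{V}\leq_{\chi}\mathcal{W}$. The $\sigma$-algebra part is handled by the transitivity of set inclusion: from $\mathcal{F}_{\mathcal{V}}\subset\mathcal{F}_{\mathcal{U}}$ and $\mathcal{F}_{\mathcal{U}}\subset\mathcal{F}_{\mathcal{W}}$ one obtains $\mathcal{F}_{\mathcal{V}}\subset\mathcal{F}_{\mathcal{W}}$. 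For the measure part I would unfold $\gg$ into its null-set formulation: $\mathbb{P}_{\mathcal{V}}\gg\mathbb{P}_{\mathcal{U}}$ means $\mathbb{P}_{\mathcal{U}}\ll\mathbb{P}_{\mathcal{V}}$, and $\mathbb{P}_{\mathcal{U}}\gg\mathbb{P}_{\mathcal{W}}$ means $\mathbb{P}_{\mathcal{W}}\ll\mathbb{P}_{\mathcal{U}}$. Then for any $A$ with $\mathbb{P}_{\mathcal{V}}(A)=0$, the first absolute continuity gives $\mathbb{P}_{\mathcal{U}}(A)=0$ and the second gives $\mathbb{P}_{\mathcal{W}}(A)=0$; thus $\mathbb{P}_{\mathcal{W}}\ll\mathbb{P}_{\mathcal{V}}$, that is, $\mathbb{P}_{\mathcal{V}}\gg\mathbb{P}_{\mathcal{W}}$. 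Combining the two components yields $\mathcal{V}\leq_{\chi}\mathcal{W}$.

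The argument is essentially routine, and the only point demanding genuine care is the bookkeeping of the domination symbol: because $\gg$ reverses the direction of absolute continuity $\ll$, one must track which measure's null sets are being controlled at each step so that the null-set implications chain in the correct order. I would not attempt to prove antisymmetry, since the statement asserts only a preorder; indeed antisymmetry would fail here, as two distinct yet mutually absolutely continuous measures on the same $\sigma$-algebra give elements of $\chi$ that are equivalent under $\leq_{\chi}$ without being equal.
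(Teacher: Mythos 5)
Your proof is correct and takes essentially the same route as the paper's: reflexivity from $\mathcal{F}_{\mathcal{U}}\subseteq\mathcal{F}_{\mathcal{U}}$ together with self-absolute-continuity, and transitivity by combining transitivity of set inclusion with the null-set chain $\mathbb{P}_{\mathcal{V}}(A)=0\Rightarrow\mathbb{P}_{\mathcal{U}}(A)=0\Rightarrow\mathbb{P}_{\mathcal{W}}(A)=0$ after unfolding $\gg$ into $\ll$. Your bookkeeping of the domination direction matches the paper's exactly, and your conclusion $\mathcal{F}_{\mathcal{V}}\subset\mathcal{F}_{\mathcal{W}}$ is even stated correctly where the paper's text contains the typo $\mathcal{F}_{\mathcal{V}}\subset\mathcal{F}_{\mathcal{V}}$.
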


To present the following example, it is necessary to consider the following definition:

\begin{definicion}[Bounded function (Motoyama-Tanaka~\cite{HitoshiM})]\label{def:flechasacotadas}
    Let $(X,\mathcal{F}_X,\mathbb{P}_X)$ and $(Y,\mathcal{F}_Y,\mathbb{P}_Y)$ be two probability spaces. Let  $\func{f}{(Y,\mathcal{F}_Y,\mathbb{P}_Y)}{(X,\mathcal{F}_X,\mathbb{P}_X)}$ a function $\mathcal{F}_Y/\mathcal{F}_X-$ easurable, it is said that $f$ is \emph{bounded} if there exists a positive number  $M>0$ such that 
    \[
        \mathbb{P}_Y(f^{-1}(A)) \leq M\mathbb{P}_X (A),  
    \]
    for all $A\in\mathcal{F}_X$. 
\end{definicion}

Thus, we have the following example of a category in the probabilistic context:

\begin{definicion}[Category \textbf{CPS}]\label{def:catCPS}
    The \textbf{CPS} \emph{category} is defined by the following information: 
    \begin{itemize}
        \item 
            \textbf{Objetos:} Probability spaces;
        \item
            \textbf{Flechas:} Let $\overline{X} = (X,\mathcal{F}_X,\mathbb{P}_X)$ y $\overline{Y}=(Y,\mathcal{F}_Y,\mathbb{P}_Y)$ two probability spaces, the set of arrows between these spaces is defined by
            \[
                \textbf{CPS}(\overline{X},\overline{Y}) = \{f|\func{f}{\overline{X}}{\overline{Y}} \; \text{is a bounded function.}\}.
            \]
        \item
           \textbf{Composition:} Usual composition of functions;
        \item
            \textbf{Identidades:} If $\func{Id}{(X,\mathcal{F}_X)}{(X,\mathcal{F}_X)}$ represents the identity function (the same as $\mathcal{F}_X-$measurable), then the identity arrow for a probability space in \textbf{CPS} is denoted by 
            \[
                \func{id}{(X,\mathcal{F}_X,\mathbb{P}_X)}{(X,\mathcal{F}_X,\mathbb{P}_X).}
            \]
            Note that $id$ is a bounded function. Indeed, for all $ A \in\mathcal{F}_X $, we have to
            \[
                \mathbb{P}_X \left(id^{-1}(A)\right) = \mathbb{P}_X(A). 
            \]
    \end{itemize}
\end{definicion}

\begin{observacion}
    \textbf{CPS} is a category.

     It is enough to show that the composition of bounded functions is bounded, for this, they are $\func{f}{(X,\mathcal{F}_X,\mathbb{P}_X)}{(Y,\mathcal{F}_Y,\mathbb{P}_Y)}$ and $\func{g}{(Y,\mathcal{F}_Y,\mathbb{P}_Y)}{(Z,\mathcal{F}_Z,\mathbb{P}_Z)}$ two arrows in \textbf{CPS}, we are going to show that $\func{g\circ f}{(X,\mathcal{F}_X,\mathbb{P}_X)}{(Z,\mathcal{F}_Z,\mathbb{P}_Z)}$ is bounded. Let $A\in\mathcal{F}_Z$, we must prove that there exists $ M> 0 $ such that 
    \[
            \mathbb{P}_X \left( f^{-1} \left(g^{-1} (A)\right)  \right) \leq M\mathbb{P}_Z (A).
    \]
   First of all, we know there exists $ M_1> 0 $ such that
    \begin{equation}\label{eqn:prop3.1_01}
        \mathbb{P}_Y (g^{-1}(A))\leq M_1 \mathbb{P}_Z(A).
    \end{equation}
    Now, exists $M_2>0$ such that
    \begin{equation}\label{eqn:prop3.1_02}
        \mathbb{P}_X\left( f^{-1}\left( g^{-1}(A) \right) \right) \leq M_2 \mathbb{P}_Y(g^{-1}(A),
    \end{equation}
   for $g^{-1}(A)\in\mathcal{F}_Y$.

    Then, combining  \eqref{eqn:prop3.1_01} and \eqref{eqn:prop3.1_01}, we get that 
    \[
        \mathbb{P}_X\left( f^{-1}\left( g^{-1}(A) \right) \right) \leq M_2\cdot M_1 \mathbb{P}_Z(A),
    \]
   from where, taking $M:=M_1\cdot M_2 >0$.

    Therefore, we have proven that $ g \circ f $ is an arrow in \textbf{CPS}.
\end{observacion}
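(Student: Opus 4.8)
The plan is to verify the category axioms for \textbf{CPS} one by one, observing that all but one are inherited for free from the underlying category of functions. The objects and hom-sets are fixed by Definition~\ref{def:catCPS}, and since every arrow of \textbf{CPS} is in particular a measurable function, the composition is the ordinary composition of functions, which is associative; likewise the unit laws hold because they already hold at the level of set-functions. Moreover, Definition~\ref{def:catCPS} already records that each identity function $id$ is bounded, with constant $M=1$, so the identities genuinely lie in \textbf{CPS}. Hence the only substantive point is closure under composition: the composite of two composable bounded functions must again be a bounded function.

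So I would fix two arrows $\func{f}{\overline{X}}{\overline{Y}}$ and $\func{g}{\overline{Y}}{\overline{Z}}$ in \textbf{CPS} and show that $g\circ f$ is bounded. Its measurability is immediate, since a composition of measurable functions is measurable. For the boundedness estimate, I would extract from Definition~\ref{def:flechasacotadas} a constant $M_2>0$ with $\mathbb{P}_X(f^{-1}(B))\leq M_2\,\mathbb{P}_Y(B)$ for every $B\in\mathcal{F}_Y$, and a constant $M_1>0$ with $\mathbb{P}_Y(g^{-1}(A))\leq M_1\,\mathbb{P}_Z(A)$ for every $A\in\mathcal{F}_Z$. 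The central observation is the set-theoretic identity $(g\circ f)^{-1}(A)=f^{-1}(g^{-1}(A))$, which lets me chain the two bounds: first apply the estimate for $f$ to the set $B=g^{-1}(A)$, then the estimate for $g$ to $A$, obtaining
\[
    \mathbb{P}_X\bigl((g\circ f)^{-1}(A)\bigr) = \mathbb{P}_X\bigl(f^{-1}(g^{-1}(A))\bigr) \leq M_2\,\mathbb{P}_Y\bigl(g^{-1}(A)\bigr) \leq M_2 M_1\,\mathbb{P}_Z(A).
\]
Setting $M:=M_1 M_2>0$ then witnesses that $g\circ f$ is bounded, completing the argument.

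The one point requiring care — and essentially the only place the argument could slip — is that the bound for $f$ must be applied to a legitimate element of $\mathcal{F}_Y$. This is exactly why the measurability of $g$ matters: it guarantees $g^{-1}(A)\in\mathcal{F}_Y$, so that the boundedness inequality for $f$ is applicable with $B=g^{-1}(A)$. Once this is in place, the remainder is a routine chaining of inequalities, with no genuine obstacle beyond bookkeeping of the constants and of the direction of composition. It is worth stressing that the resulting $M$ depends only on $f$ and $g$ and not on the chosen set $A$, which is precisely what the definition of boundedness requires.
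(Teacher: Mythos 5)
Your proposal is correct and follows essentially the same route as the paper: extract the boundedness constants $M_1$ for $g$ and $M_2$ for $f$, apply the estimate for $f$ to the set $g^{-1}(A)\in\mathcal{F}_Y$, chain the inequalities via $(g\circ f)^{-1}(A)=f^{-1}(g^{-1}(A))$, and take $M=M_1M_2$. If anything, your write-up is slightly cleaner, since you fix the constants uniformly before choosing $A$ (the paper's phrasing ``let $A$\ldots there exists $M$'' inverts the quantifiers, though the definition of boundedness makes the constants uniform anyway) and you explicitly record the trivial axioms (associativity, identities) that the paper leaves implicit.
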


\begin{observacion}\label{obs:catCMS}
    In fact, Hitoshi \cite{HitoshiM} defines the category \textbf{CMS}, whose objects are measured spaces and the arrows between two measured spaces $(X,\mathcal{F}_X,\mu)$ y $(Y,\mathcal{F}_Y,\nu)$ they are bounded functions.
\end{observacion}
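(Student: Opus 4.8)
The plan is to verify that \textbf{CMS} satisfies all the axioms of Definition~\ref{def:categoria}, proceeding exactly in parallel with the verification already carried out for \textbf{CPS}. The notion of bounded function from Definition~\ref{def:flechasacotadas} extends verbatim to measured spaces: given $(Y,\mathcal{F}_Y,\nu)$ and $(X,\mathcal{F}_X,\mu)$, a measurable map $\func{f}{(Y,\mathcal{F}_Y,\nu)}{(X,\mathcal{F}_X,\mu)}$ is bounded if there exists $M>0$ with $\nu(f^{-1}(A))\leq M\mu(A)$ for all $A\in\mathcal{F}_X$. Since both sides of this inequality live in $[0,\infty]$, the condition makes sense even when the measures are infinite, and it is satisfied trivially on any set of infinite $\mu$-measure. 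The four things I would check are: existence of identities, the identity laws, associativity, and closure of bounded maps under composition.

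First I would treat the identities. For each measured space $(X,\mathcal{F}_X,\mu)$ the identity function $\func{\text{id}}{(X,\mathcal{F}_X,\mu)}{(X,\mathcal{F}_X,\mu)}$ is measurable, and for every $A\in\mathcal{F}_X$ we have $\mu(\text{id}^{-1}(A)) = \mu(A) \leq 1\cdot\mu(A)$, so $\text{id}$ is bounded with constant $M=1$. The identity laws $f\circ\text{id} = f$ and $\text{id}\circ g = g$ then hold because they already hold for ordinary functions, and likewise associativity $h\circ(g\circ f) = (h\circ g)\circ f$ is inherited from the associativity of ordinary function composition; no measure-theoretic input is needed for these.

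The heart of the argument, and the only step with any content, is closure under composition, which reproduces the chain of inequalities from the \textbf{CPS} case word for word. Given bounded maps $\func{f}{(X,\mathcal{F}_X,\mu)}{(Y,\mathcal{F}_Y,\nu)}$ and $\func{g}{(Y,\mathcal{F}_Y,\nu)}{(Z,\mathcal{F}_Z,\lambda)}$, with constants $M_2$ and $M_1$ respectively, I would fix $A\in\mathcal{F}_Z$ and estimate
\[
    \mu\left(f^{-1}\left(g^{-1}(A)\right)\right) \leq M_2\,\nu\left(g^{-1}(A)\right) \leq M_2 M_1\,\lambda(A),
\]
where the first inequality applies boundedness of $f$ to the set $g^{-1}(A)\in\mathcal{F}_Y$ and the second applies boundedness of $g$ to $A$. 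Hence $g\circ f$ is bounded with constant $M := M_1 M_2 > 0$, so composition is well defined in \textbf{CMS}.

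The main point to notice, rather than an obstacle, is that none of these verifications use the normalization $\mu(X)=1$: the boundedness inequality and its behaviour under composition are completely insensitive to the total mass of the measures. This is precisely why the proof for \textbf{CPS} transfers without modification, and it is what justifies the more general definition of Hitoshi recorded here. The only genuinely new point of care is bookkeeping for infinite measures, but since the defining inequality is read in $[0,\infty]$ and holds automatically wherever the codomain measure is infinite, this causes no difficulty.
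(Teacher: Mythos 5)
Your verification is correct and matches the paper's approach exactly: the paper establishes closure under composition and the identity axioms for \textbf{CPS} via the same $M_1 M_2$ chain of inequalities, and this observation simply records that the argument is insensitive to the normalization $\mu(X)=1$, just as you note. Your extra remark that the defining inequality, read in $[0,\infty]$, holds automatically when the codomain measure is infinite is a harmless and correct addition.
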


Finally, we present the category of stochastic functions defined in \cite{LawvereF}, for this, it is necessary to consider the following definition:

   \begin{definicion}[Stochastic function]
    Let $(X,\mathcal{F}_X)$ and $(Y,\mathcal{F}_Y)$ measurable spaces. A \emph{stochastic function} $\func{T}{(X,\mathcal{F}_X)}{(Y,\mathcal{F}_Y)}$ is a function
    \[
        \func{T}{X\times \mathcal{F}_Y}{[0,1]}, 
    \]
    such that  
    \begin{enumerate}
        \item 
             For all $x\in X$, the function $\func{T_x:=T(x,\cdot)}{\mathcal{F}_Y}{[0,1]}$ is a probability measure in $(Y,\mathcal{F}_Y)$, that is, $T_x$ satisfies the following properties: 
               \begin{enumerate}
                    \item 
                        $0\leq T_x (F_Y) \leq 1$ for all $F_Y\in\mathcal{F}_Y$; 
                    \item
                        $ T_x(Y) = 1$;
                    \item
                        $T_x \left( \displaystyle\bigcup_{n\in\mathbb{N}} F_n \right) = \displaystyle\sum_{n\in\mathbb{N}} T_x (F_n)$ for any $\{F_n\}_{n\in\mathbb{N}}\subset\mathcal{F}_Y$ collection of two-by-two disjoint events. 
                \end{enumerate}
            \item
               For each $F_Y\in\mathcal{F}_Y$, the function $\func{T_{F_Y}:= T(\cdot,F_Y)}{X}{[0,1]}$ is $\mathcal{F}_X/\mathcal{B}([0,1])-$ measurable. 
    \end{enumerate}
    
     We will call $T(x,F_Y)$ the $ T $ -probability (conditional) of the event $F_Y$ on $(Y,\mathcal{F}_Y)$ given the point $ x $ at  $(X,\mathcal{F}_X)$.
\end{definicion}

Let's present some examples to illustrate the concept of stochastic function:

\begin{ejemplo}[Characteristic function]\label{ejem:funcaracteristica}
   Let $(X,\mathcal{F}_X)$, define the characteristic function $\func{\delta}{X\times \mathcal{F}_X}{\R}$ by 
   \[
        \delta(x, F_X) = 
        \begin{cases}
            1 \quad & \text{si} \quad x\in F_X, \\
            0 \quad & \text{si} \quad x\notin F_X.
        \end{cases}
   \]
  for all $x\in X$ and $F_X\in\mathcal{F}_X$. 
   
  Note that the function $\delta_x=\delta(x,\cdot)$ corresponds to the Dirac measure at the point $x\in X$. 
   
    On the other hand, it is easy to see that $\delta(\cdot,F_X)$ for each $F_X\in\mathcal{F}_X$ is a function $\mathcal{F}_X/\mathcal{B}(\R)-$ meausurable. 
\end{ejemplo}

\begin{ejemplo}[Deterministic stochastic functions]
  Let $(X,\mathcal{F}_X),(Y,\mathcal{F}_Y)$ be measurable spaces and $\func{f}{(X,\mathcal{F}_X)}{(Y,\mathcal{F}_Y)}$ a $\mathcal{F}_X/\mathcal{F}_Y-$measurable  function. We define the stochastic function $\func{\delta_f}{(X,\mathcal{F}_X)}{(Y,\mathcal{F}_Y)}$ given by  
   \[
        \funcion{\delta_f}{X\times \mathcal{F}_Y}{[0,1]}{(x,F_Y)}{
        \delta_f(x,F_Y) = 
        \begin{cases}
            1 \quad &\text{si} \quad f(x) \in F_Y, \\
            0 \quad &\text{si} \quad f(x) \in F_Y^C. 
        \end{cases}
        }
   \]
   We will call $ \delta_f $ the deterministic stochastic function induced by $f$.
\end{ejemplo}

\begin{definicion}[Composition of stochastic functions]\label{def:composicionstoch}
    Let $\func{T}{(X,\mathcal{F}_X)}{(Y,\mathcal{F}_Y)}$ and $\func{U}{(Y,\mathcal{F}_Y)}{(Z,\mathcal{F}_Z)}$ two stochastic functions. The composition $\func{U\circ T}{(X,\mathcal{F}_X)}{(Z,\mathcal{F}_Z)}$ of $U$ and $T$ is defined by the integral  
     \[
         U\circ T (x,F_Z) = \int_Y U(\cdot,F_Z) \,d T_x = \displaystyle\int_{y\in Y} U(y,F_Z) \, T(x,dy). 
     \]
     for each $x\in X$ and $F_Z\in \mathcal{F}_Z$. 
\end{definicion}

\begin{ejemplo}
       Let $\func{f}{(X,\mathcal{F}_X)}{(Y,\mathcal{F}_Y)}$ and $\func{g}{(Y,\mathcal{F}_Y)}{(Z,\mathcal{F}_Z)}$ measurable functions. To exemplify the composition of two stochastic functions, let us consider $\delta_f$ y $\delta_g$ lthe deterministic stochastic functions induced by $f$ y $g$, respectively. Let $x\in X$ and $F_Z\in\mathcal{F}_Z$, we have
        \begin{equation}\label{eqn:ejemplo11}
            \delta_g\circ \delta_f (x,F_Z) = \int_Y \delta_g (\cdot,F_Z) \,  d\delta_{f,x}.
        \end{equation}
       On the other hand, let us note that
        \[
        \delta_g(x,F_Z) = 
        \begin{cases}
            1 \quad &\text{si} \quad g(x) \in F_Z, \\
            0 \quad &\text{si} \quad g(x) \in F_Z^C. 
        \end{cases}
        = 
        \begin{cases}
            1 \quad &\text{si} \quad x \in g^{-1}(F_Z), \\
            0 \quad &\text{si} \quad x \in g^{-1}\left(F_Z^C\right). 
        \end{cases}
        \]
        Thus, together with \eqref{eqn:ejemplo11}, we have to 
        \begin{align*}
            \delta_g\circ \delta_f (x,F_Z)
            & = \int_{g^{-1}(F_Z)} \delta_g (\cdot,F_Z) \,  d\delta_{f,x} + \int_{g^{-1}(F_Z^C)} \delta_g (\cdot,F_Z) \,  d\delta_{f,x}, \\
            & = \int_{g^{-1}(F_Z)}  \,  d\delta_{f,x} , \\
            & = \delta_f(x,g^{-1}(F_Z)), 
        \end{align*}
       where, we can conclude that 
        \[
             \delta_g\circ \delta_f (x,F_Z) = 
             \begin{cases}
                 1 \quad & \text{si} \quad f(x) \in g^{-1}(F_Z), \\
                 0 \quad & \text{si} \quad f(x) \in g^{-1}(F_Z^C).
             \end{cases}
             = 
             \begin{cases}
                 1 \quad & \text{si} \quad g(f(x)) \in F_Z, \\
                 0 \quad & \text{si} \quad g(f(x)) \in F_Z^C.
             \end{cases}
        \]
        that is, we have proven that
        \[
            \delta_f\circ \delta_g = \delta_{g\circ f}. 
        \]
\end{ejemplo}

\begin{observacion}
   We can see that the integral of~\eqref{def:composicionstoch} is well defined, since $ U(\cdot,F_Z)$ is a measurable function and $T_x$ is a probability measure defined on $(Y,\mathcal{F}_Y)$. 
     
    Subtract us to verify that $ U \circ T $ is a stochastic function.
    \begin{itemize}
        \item 
             Let $x\in X$,let us prove that  $U\circ T(x,\cdot)$ is a probability measure on $(Z,\mathcal{F}_Z)$. To do this, we will show that 
     
     \begin{itemize}
        \item
            $0\leq (U\circ T)_x (F_Z) \leq 1$ for each $F_Z\in\mathcal{F}_Z$. 
            
            Let $F_Z\in\mathcal{F}_Z$ and $y\in Y$, since $U_y$ is a probability measure on $(Z,\mathcal{F}_Z)$, we have
            \[
                0\leq U(y,F_Z) \leq 1, 
            \]
             whence, by the monotony of the integral, it follows that
             \[
                0\leq U\circ T (x,F_Z) \leq 1. 
             \]
         \item 
            $U\circ T(x,Z) = 1$. In fact, we have 
            \begin{align*}
                U\circ T (y,Z)
                & = \int_{Y} U(y,Z) \, T(x,dy), \\
                & = \int_Y \, T(x,dy), \\
                & = T(x,Y) , \\
                & = 1. 
            \end{align*}
        \item
            Let $\{F_n\}_{n\in\mathbb{N}}\subset\mathcal{F}_Z$ a collection of events of $\mathcal{F}_Z$ disjoint two by two, we must prove that  $U\circ T\left(x, \displaystyle\bigcup_{n\in\mathbb{N}} F_n \right) = \displaystyle\sum_{n\in\mathbb{N}} U\circ T(x,F_n)$. 
            
             We have to 
            \begin{align*}
                (U\circ T)\left(x, \displaystyle\bigcup_{n\in\mathbb{N}} F_n \right)
                & = \int_{Y} U \left(y, \displaystyle\bigcup_{n\in\mathbb{N}} F_n \right) \, d T_x,  \\ 
                 & = \int_Y \sum_{n\in\mathbb{N}} U(y,F_n) \, T(x,dy), 
            \end{align*}
            from where, thanks to the monotone convergence theorem, we obtain that 
            \begin{align*}
                (U\circ T)\left(x, \displaystyle\bigcup_{n\in\mathbb{N}} F_n \right)
                & = \sum_{n\in\mathbb{N}} \int_Y U(y,F_n) \, T(x,dy), \\
                & = \sum_{n\in\mathbb{N}} U\circ T(x,F_n).
            \end{align*}
     \end{itemize}
     With this, we have proved that  $U\circ T(x,\cdot)$ is a probability measure defined on $(Z,\mathcal{F}_Z)$. 
        \item
            Let $F_Z\in\mathcal{F}_Z$,we are going to show that $\func{U\circ T (\cdot,F_Z)}{X}{[0,1]}$ is $\mathcal{F}_X/\mathcal{B}([0,1])-$ measurable.
            
            Let's proof the result for characteristic functions, for this, let $\func{f}{(Y,\mathcal{F}_Y)}{(Z,\mathcal{F}_Z)}$ a  $\mathcal{F}_X/\mathcal{F}_Y-$measurable function, consider the deterministic stochastic function  $\func{\delta_f}{(Y,\mathcal{F}_Y)}{(Z,\mathcal{F}_Z)}$. For $x\in X$ and $F_Z\in\mathcal{F}_Z$, we have  
            \begin{align*}
                \delta_f\circ T (x,F_Z) 
                & = \int_Y \delta_f(\cdot,F_Z) \, dT_x , \\
                & = \int_{f^{-1}(F_Z)} \delta_f(\cdot,F_Z) \, dT_x + \int_{f^{-1}(F_Z^C)} \delta_f(\cdot,F_Z) \, dT_x, \\
                & = \int_{f^{-1}(F_Z)} \delta_f(\cdot,F_Z) \, dT_x, \\
                & = \int_{f^{-1}(F_Z)}  \, dT_x, \\
                & = T(x,f^{-1}(F_Z)). 
            \end{align*}
            With this, we have to
            \[
                \delta_f\circ T (\cdot,F_Z) = T(\cdot,f^{-1}(F_Z)), 
            \]
            from where, since  $T_{F_Y}$ is $\mathcal{F}_X / \mathcal{B}(\R)-$ measurable for each $F_Y\in\mathcal{F}_Y$ it follows that  $\delta_f\circ T (\cdot,F_Z)$ is too. 
            
             Thanks to the linearity of the integral, the previous result is still true if $ U_{F_Z} $ is a simple function. Then, the general case is a consequence of the monotone convergence and the fact that $ U_{F_Z} $ can be seen as the limit of an increasing sequence of simple functions. 
    \end{itemize}
\end{observacion}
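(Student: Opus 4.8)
The plan is to verify the two defining conditions of a stochastic function for the set-to-interval map $U \circ T$. Before that, I would record that the integral in Definition~\ref{def:composicionstoch} makes sense for every fixed pair $(x, F_Z)$: by condition~2 in the definition of a stochastic function the integrand $y \mapsto U(y, F_Z)$ is $\mathcal{F}_Y$-measurable, it takes values in $[0,1]$, and $T_x$ is a probability measure on $(Y, \mathcal{F}_Y)$, so we are integrating a bounded measurable function against a finite measure.

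First I would fix $x \in X$ and show that $(U \circ T)_x := U \circ T(x, \cdot)$ is a probability measure on $(Z, \mathcal{F}_Z)$. The bound $0 \le (U \circ T)_x(F_Z) \le 1$ follows from monotonicity of the integral together with $0 \le U(y, F_Z) \le 1$ and $T_x(Y) = 1$. Normalization $(U \circ T)_x(Z) = 1$ follows by substituting $U(y, Z) = 1$ (condition~1b for $U$) and using $T_x(Y) = 1$. For countable additivity, given pairwise disjoint $\{F_n\}_{n \in \mathbb{N}} \subset \mathcal{F}_Z$, I would use that each $U_y$ is a measure to write $U(y, \bigcup_n F_n) = \sum_n U(y, F_n)$ inside the integral; since the partial sums increase to this series, the monotone convergence theorem lets me exchange sum and integral, yielding $\sum_n (U \circ T)_x(F_n)$.

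The main obstacle is the second condition: for each fixed $F_Z \in \mathcal{F}_Z$ the map $x \mapsto (U \circ T)(x, F_Z) = \int_Y U(y, F_Z) \, T(x, dy)$ must be $\mathcal{F}_X/\mathcal{B}([0,1])$-measurable. Here the parameter $x$ enters only through the measure $T(x, \cdot)$, so I would argue by the standard machine. The base case is when $U(\cdot, F_Z)$ is an indicator $\mathbbm{1}_B$ with $B \in \mathcal{F}_Y$: then $\int_Y \mathbbm{1}_B \, T(x, dy) = T(x, B) = T_B(x)$, which is measurable in $x$ by condition~2 for $T$. This is exactly the deterministic case $U = \delta_f$ computed above, where the integral collapses to $T(\cdot, f^{-1}(F_Z))$. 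By linearity of the integral the claim then holds whenever $U(\cdot, F_Z)$ is a nonnegative simple function.

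Finally, for a general integrand I would write the nonnegative measurable function $U(\cdot, F_Z)$ as the pointwise increasing limit of simple functions $s_k \uparrow U(\cdot, F_Z)$. By monotone convergence, $\int_Y s_k \, T(x, dy) \to \int_Y U(y, F_Z) \, T(x, dy)$ for every $x$, so $(U \circ T)(\cdot, F_Z)$ is a pointwise limit of the measurable functions $x \mapsto \int_Y s_k \, T(x, dy)$ and is therefore itself measurable. I expect the only delicate point to be the bookkeeping in this approximation step: one must choose the simple functions approximating $U(\cdot, F_Z)$ independently of $x$ (for instance via the canonical dyadic approximation of $y \mapsto U(y,F_Z)$) so that measurability transfers cleanly through the limit.
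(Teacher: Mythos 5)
Your proposal is correct and takes essentially the same route as the paper: first check that $(U\circ T)_x$ is a probability measure (monotonicity of the integral for the bounds, $U(y,Z)=1$ for normalization, and the monotone convergence theorem for countable additivity), then prove measurability of $x\mapsto (U\circ T)(x,F_Z)$ by the standard machine of indicators, simple functions, and increasing limits. The only difference is cosmetic and in your favor: your base case treats an arbitrary indicator $\mathbbm{1}_B$ with $B\in\mathcal{F}_Y$, so that $\int_Y \mathbbm{1}_B \, T(x,dy) = T(x,B)$ is measurable directly by condition~2 for $T$, whereas the paper reaches the same point through the detour of deterministic stochastic functions $\delta_f$, which strictly speaking only produces indicators of preimages $f^{-1}(F_Z)$.
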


\begin{ejemplo}[Category \textbf{Stoch}]
     
     The \textbf{Stoch} category is defined by the following information: 
     
     \begin{enumerate}
         \item 
            \textbf{Objects:} Measurable spaces;
        \item
            \textbf{Arrows:} Let $(X,\mathcal{F}_X)$ and $(Y,\mathcal{F}_Y)$ two measurable spaces, the set of arrows between these spaces is defined by
            \[
                \textbf{Stoch}((X,\mathcal{F}_X),(Y,\mathcal{F}_Y)) = \{U|\func{U}{(X,\mathcal{F}_X)}{(y,\mathcal{F}_Y)}\;\text{ is a stochastic function}\}.
            \]
        \item
            \textbf{Composition}: Composition of stochastic functions in the sense of the definition~\ref{def:composicionstoch}.
        \item
            \textbf{Identities}: The stochastic identity function is the characteristic function defined in example ~\ref{ejem:funcaracteristica} and we will denote it by $\func{Id}{(X,\mathcal{F}_X)}{(X,\mathcal{F}_X)}$.
     \end{enumerate}
    \end{ejemplo}
    
    \begin{observacion}
       \textbf{Stoch} is a category. To prove this, thanks to the previous results, it is enough to show that the composition in \textbf{Stoch} is associative and the identity axioms are satisfied.
        \begin{itemize}
            \item 
                \textbf{Associativity:}

                Let $\func{T}{(X,\mathcal{F}_X)}{(Y,\mathcal{F}_Y)}$, $\func{U}{(Y,\mathcal{F}_Y)}{(Z,\mathcal{F}_Z)}$ y $\func{V}{(Z,\mathcal{F}_Z)}{(W,\mathcal{F}_W)}$ stochastic functions, we will show that 
                \[
                    V\circ(U\circ T) = (V\circ U)\circ T.
                \]
                To do this, let  $x\in X$ and $F_W\in\mathcal{F}_W$, we must prove that 
                \[
                    V\circ(U\circ T)(x,F_W) = (V\circ U)\circ T (x,F_W). 
                \]
                
                On the one hand, for $y\in Y$ and $F_W\in\mathcal{F}_W$, we have 
                \[
                    V\circ U (y,F_W) = \int_Z V(\cdot,F_W) \, d U_y, 
                \]
                then
                \begin{align}\label{eqn:stochcat_01}
                    (V\circ U)\circ T (x,F_W)
                    & = \int_Y V\circ U(\cdot,F_W) \, d T_x, \notag \\
                    & = \int_Y \left( \int_Z V(\cdot,F_W) \, d U_y \right) \, dT_x
                \end{align}
                
               On the other hand, for $ x \in X $ y $ F_Z \in \mathcal{F}_Z $, we have
                \[
                     U\circ T (x,\mathcal{F}_Z) = \int_Y U(\cdot,F_Z) \,d T_x, 
                \]
               with which, we obtain that
                \[
                    V\circ (U\circ T) (x,F_W) = \int_Z V(\cdot,F_W) \, d( U\circ T)_x, 
                \]
                from where, together with proposition 3 of the lemma ~\ref{lema:monadaGirylema}, we obtain that 
                \[
                     V\circ (U\circ T) (x,F_W) =\int_Y \left( \int_Z V(\cdot,F_W) \, U_y \right) \, dT_x.
                \]
                Thus, combining the preceding identity with \eqref{eqn:stochcat_01}, we can conclude that  
                \[
                    V\circ(U\circ T)(x,F_W) = (V\circ U)\circ T (x,F_W). 
                \]

                \item 
                Let $\func{T}{(X,\mathcal{F}_X)}{(Y,\mathcal{F}_Y)}$ and $\func{U}{(Y,\mathcal{F}_Y)}{(X,\mathcal{F}_X)}$ stochastic functions, we must prove that
                
                \[
                     Id\circ U = U\quad\text{y}\quad T\circ I = T. 
                \]
                
                For this, let $y\in Y$ and $F_X\in\mathcal{F}_X$, we are going to show that 
                
                \[
                     Id\circ U (y,F_X) = U (y,F_X). 
                \]
                
                By definition of composition applied to the stochastic functions $ Id $ and $ U $, we have
                \begin{align*}
                    Id\circ U (y,F_X)
                    & = \int_X Id (x,F_X) \, d U_y, \\
                    & = \int_X \chi(x,F_X) \, d U_y, \\
                    & = \int_{F_X} \chi(x,F_X) \, d U_y + \int_{F_X^C} \chi(x,F_X) \, d U_y, \\
                    & = \int_{F_X}  \, d U_y, \\
                    & = U(y,F_X). 
                \end{align*}

                Now, let's show that  
                \[
                    T\circ Id = T.
                \]
                
                Let $x\in X$ and $F_Y\in\mathcal{F}_Y$, we must prove that 
                \[
                    T\circ Id (x,\mathcal{F}_X) = T (x,\mathcal{F}_X).
                \]
                
                Again, by definition of composition applied to the stochastic functions $ T $ and $ Id $, we have to
               \begin{align*}
                    T\circ Id (x,\mathcal{F}_X)
                    & =  \int_X T(x,F_X) \, d Id_x, \\ 
                    & = \int_X T(x,F_X) \, d \delta_x,
               \end{align*}
                from where, since $ \delta_x $ is the Dirac measure at the point $ x \in X $, we obtain that
                \[
                    T\circ Id (x,\mathcal{F}_X) = T(x,F_X), 
                \]
               with this, we can conclude that
                \[
                    T\circ Id = T.
                \]
                    \end{itemize}
    \end{observacion}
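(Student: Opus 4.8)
The plan is to establish the two category axioms that remain after the preliminary work: associativity of composition and the identity laws. The earlier observations already guarantee that the integral in Definition~\ref{def:composicionstoch} is well defined and that the composite $U \circ T$ of two stochastic functions is again a stochastic function, so it only remains to verify these structural identities.

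First, for associativity, I would take stochastic functions $\func{T}{(X,\mathcal{F}_X)}{(Y,\mathcal{F}_Y)}$, $\func{U}{(Y,\mathcal{F}_Y)}{(Z,\mathcal{F}_Z)}$, and $\func{V}{(Z,\mathcal{F}_Z)}{(W,\mathcal{F}_W)}$, fix a point $x \in X$ and an event $F_W \in \mathcal{F}_W$, and expand each side of the identity $V \circ (U \circ T) = (V \circ U) \circ T$ using the integral definition of composition. Unwinding $(V \circ U) \circ T (x, F_W)$ yields the iterated integral $\int_Y \left( \int_Z V(\cdot, F_W) \, d U_y \right) d T_x$, while $V \circ (U \circ T) (x, F_W)$ first produces $\int_Z V(\cdot, F_W) \, d (U \circ T)_x$, an integral taken against the composite measure $(U \circ T)_x$. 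The task is then to show these two expressions coincide.

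The main obstacle is precisely this last step: converting the integral against $(U \circ T)_x$ back into an iterated integral over $Y$ and $Z$. This is a Fubini/disintegration-type statement for the Giry monad, and I would invoke proposition 3 of Lemma~\ref{lema:monadaGirylema} to justify the interchange, obtaining $\int_Z V(\cdot, F_W) \, d (U \circ T)_x = \int_Y \left( \int_Z V(\cdot, F_W) \, d U_y \right) d T_x$. Matching this against the expansion of $(V \circ U) \circ T$ then closes the argument. The delicacy is that one is integrating a measurable function against a kernel-defined measure, so the legitimacy of swapping the order rests on the measurability established earlier together with the monotone-class argument underpinning the cited lemma.

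Finally, for the identity laws I would compute directly, using that the identity arrow is the characteristic stochastic function of Example~\ref{ejem:funcaracteristica}, whose fibre $Id_x = \delta_x$ is the Dirac measure at $x$. For $Id \circ U = U$, integrating the characteristic function $\chi(\cdot, F_X)$ against $U_y$ collapses to $\int_{F_X} d U_y = U(y, F_X)$; for $T \circ Id = T$, integrating $T(\cdot, F_Y)$ against the Dirac measure $\delta_x$ simply evaluates the integrand at $x$, returning $T(x, F_Y)$. These are routine once the Dirac description of the identity is in hand, so no genuine difficulty arises there.
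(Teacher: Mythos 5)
Your proposal is correct and follows essentially the same route as the paper: associativity is reduced, after expanding both sides via the integral definition of composition, to the interchange identity justified by proposition 3 of Lemma~\ref{lema:monadaGirylema} (with the monotone-class/simple-function argument behind it), and the identity laws are verified pointwise using that the identity arrow has Dirac fibres $Id_x = \delta_x$. No gaps; your remark that the delicate point is precisely converting the integral against $(U\circ T)_x$ into an iterated integral is exactly where the paper also places the weight of the argument.
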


\begin{definicion}[Isomorphism]
    Let $\mathcal{C}$ be a category and $A,B\in\textbf{Ob}(\mathcal{C})$ objects. An arrow $\func{f}{A}{B}$  in $\mathcal{C}$ is an \emph{isomorphism} if there is an arrow  $\func{g}{B}{A}$ such that
    \[
        g\circ f = 1_A\quad\text{y}\quad f\circ g = 1_B. 
    \]
\end{definicion}

\begin{observacion}
   Given the arrow  $\func{f}{A}{B}$,  the arrow $\func{g}{B}{A}$ is unique. Thus, $g=f^{-1}$ is written. In this case we say that $ A $ and $ B $ are isomorphic, denoted by $ A \cong B $, if there is an isomorphism between them. 
\end{observacion}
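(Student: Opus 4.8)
The plan is to prove uniqueness by the standard device of inserting an identity arrow and exploiting associativity of composition. Suppose that, besides $g$, there is a second arrow $\func{g'}{B}{A}$ satisfying $g'\circ f = 1_A$ and $f\circ g' = 1_B$. The goal is then to deduce that $g = g'$, so that the inverse of $f$, if it exists at all, is forced to be unique and the notation $f^{-1}$ is unambiguous.

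First I would rewrite $g$ as $g\circ 1_B$ using the identity axiom. Then I would replace $1_B$ by $f\circ g'$, which is legitimate precisely because $g'$ is assumed to be a right inverse of $f$. At this point associativity of composition lets me regroup the three arrows, turning $g\circ(f\circ g')$ into $(g\circ f)\circ g'$. Since $g$ is a left inverse of $f$, the factor $g\circ f$ collapses to $1_A$, and a last application of the identity axiom gives $1_A\circ g' = g'$. Concretely, the argument is the chain
\[
    g = g\circ 1_B = g\circ(f\circ g') = (g\circ f)\circ g' = 1_A\circ g' = g',
\]
which establishes $g = g'$.

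The only step that does any genuine work is the middle equality, where associativity is invoked; everything else is bookkeeping with the identity laws. The one point to keep in mind is that the two identity laws of Definition~\ref{def:categoria} must each be applied on the correct side: the equality $g\circ 1_B = g$ uses that $1_B$ acts as an identity on the right for an arrow out of $B$, while $1_A\circ g' = g'$ uses that $1_A$ acts as an identity on the left for an arrow into $A$, so that every composite appearing in the chain is well typed with respect to its domains and codomains. Having shown $g = g'$, it is then justified to speak of \emph{the} inverse of $f$ and to denote it by $f^{-1}$.
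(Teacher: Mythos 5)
Your argument is correct and is the canonical one: the chain $g = g\circ 1_B = g\circ(f\circ g') = (g\circ f)\circ g' = 1_A\circ g' = g'$, using both identity laws and one application of associativity, in fact proves the slightly stronger fact that any left inverse equals any right inverse. The paper states this observation without proof, so there is nothing to diverge from; your proposal supplies exactly the standard justification the text leaves implicit, and your care in noting which identity law acts on which side is apt, since the paper's own statement of the identity axiom in Definition~\ref{def:categoria} contains a typo (it writes $g\circ 1_A = g$ for $\func{g}{C}{A}$ where $1_A\circ g = g$ is meant), which your proof tacitly corrects.
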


\begin{ejemplo}
  We will present an example of isomorphism in the category \textbf{CMS}, which was presented in observation~\ref{obs:catCMS}. Let $(X,\mathcal{F}_X,\mu)$  be a measured space and $ c> 0 $, we have that $\mu$ is a measure defined on $(X,\mathcal{F}_X)$, from where $c\cdot\mu$ is too. Thus, consider the arrow  $\func{f}{(X,\mathcal{F}_X,\mu)}{(X,\mathcal{F}_X,c\cdot\mu)}$ in \textbf{CMS} defined by 
   \[
        \funcion{Id}{(X,\mathcal{F}_X)}{(X,\mathcal{F}_X)}{x}{x}
   \]
   We have that $ f $ is an isomorphism. Indeed, let's define $\func{g}{(X,\mathcal{F}_X,c\cdot\mu)}{(X,\mathcal{F}_X,\mu)}$ by 
   \[
        \funcion{Id}{(X,\mathcal{F}_X)}{(X,\mathcal{F}_X)}{x}{x.}
   \]
   Let's prove that $ g $ is a bounded function, for this, let $F_X\in\mathcal{F}_X$, since $Id^{-1}(F_X) = F_X$, it follows that 
   \[
        c\cdot \mu(Id^{-1}(F_X)) = c\cdot \mu(F_X) \leq 2 c \cdot\mu(F_X), 
   \]
   with which, taking $ M = 2 c> 0 $ the result follows.
   
   Furthermore, it is easy to see that $g\circ f = Id_{(X,\mathcal{F}_X,\mu)}$ and $f\circ g = Id_{(X,\mathcal{F}_X,c\cdot\mu)}$
   
\end{ejemplo}

The following definition allows us to consider new categories from those previously presented.

\begin{definicion}[Opposite Category]
    Let $\mathcal{C}$ be a category, its category \emph{opposite} or \emph{dual}, denoted by $\mathcal{C}^{\text{op}}$, is defined by the following information: 
    \begin{itemize}
        \item 
            \textbf{Objects:} \textbf{Ob}$(\mathcal{C}^{\text{op}}):=$ \textbf{Ob}($\mathcal{C}$), 
        \item
            \textbf{Arrows}: Let $A,B\in\textbf{Ob}(\mathcal{C}^\text{op})$, the collection of arrows between these objects is defined by $\mathcal{C}^{\text{op}}(B,A) = \mathcal{C}(A,B)$.
            
           That is, $\func{f}{B}{A}$ is an arrow in $\mathcal{C}^{\text{op}}$ if $\func{f}{A}{B}$ is an arrow in $\mathcal{C}.$ To refer to an arrow in the opposite category we will use the notation $f^{\text{op}}$.
        \item
            \textbf{Composition}: Let $A,B,C\in\textbf{Ob}(\mathcal{C}^\text{op})$, let's consider the arrows $\func{f^{\text{op}}}{B}{A}$ and $\func{g^{\text{op}}}{C}{B}$, its composition is defined by
            \[
                f^{\text{op}}\circ g^{\text{op}} = (g\circ f)^{\text{op}}.
            \]
        \item
            \textbf{Identities:} The same identities defined in $\mathcal{C}$. 
    \end{itemize}
\end{definicion}

\begin{observacion}
   Let us consider the following diagram in $\mathcal{C}$: 

     \begin{figure}[h]
     \centering
     \begin{pspicture}[showgrid=false](-1,0)(3.5,3.5)
                    \rput(3,3){\rnode{B}{B}}
                    \rput(0,3){\rnode{A}{A}}
                    \rput(3,0){\rnode{C}{C}}
                    \ncline[nodesep=4pt]{->}{A}{B}\naput{$f$}
                    \ncline[nodesep=4pt]{->}{B}{C}\naput{$g$}
                    \ncline[nodesep=4pt]{->}{A}{C}\nbput{$g\circ f$}
                \end{pspicture}
     \end{figure}

     The corresponding diagram in $\mathcal{C}^{\text{op}}$ is

     \begin{figure}[h]
     \centering
     \begin{pspicture}[showgrid=false](-1,0)(3.5,3.5)
                    \rput(3,3){\rnode{B}{B}}
                    \rput(0,3){\rnode{A}{A}}
                    \rput(3,0){\rnode{C}{C}}
                    \ncline[nodesep=4pt]{<-}{A}{B}\naput{$f^{\text{op}}$}
                    \ncline[nodesep=4pt]{<-}{B}{C}\naput{$g^{\text{op}}$}
                    \ncline[nodesep=4pt]{<-}{A}{C}\nbput{$f^{\text{op}}\circ g^{\text{op}}$}
                \end{pspicture}
     \end{figure}
\end{observacion}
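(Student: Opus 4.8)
The substantive content behind this observation is that $\mathcal{C}^{\text{op}}$ is genuinely a category, the two diagrams merely picturing the composition rule at its heart; so the plan is to verify the three category axioms for $\mathcal{C}^{\text{op}}$ by translating each one back into the corresponding axiom already known for $\mathcal{C}$. Since $\textbf{Ob}(\mathcal{C}^{\text{op}}) = \textbf{Ob}(\mathcal{C})$ and an arrow $f^{\text{op}}\colon B \to A$ is by definition the arrow $f\colon A \to B$ of $\mathcal{C}$ read backwards, every assertion about $\mathcal{C}^{\text{op}}$ becomes an assertion about $\mathcal{C}$ once arrows are reversed and the order of composition is flipped.

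First I would confirm that composition is well defined, which is exactly what the diagram records. Given composable arrows $f^{\text{op}}\colon B \to A$ and $g^{\text{op}}\colon C \to B$ in $\mathcal{C}^{\text{op}}$, the underlying arrows $f\colon A \to B$ and $g\colon B \to C$ of $\mathcal{C}$ satisfy $\text{cod}(f) = B = \text{dom}(g)$; hence $g \circ f$ exists in $\mathcal{C}$, and the definition sets $f^{\text{op}} \circ g^{\text{op}} = (g \circ f)^{\text{op}}\colon C \to A$. This is precisely the reversed diagonal $f^{\text{op}} \circ g^{\text{op}}$ appearing in the second figure.

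Next I would treat associativity and the identity laws. For three composable opposite arrows, unwinding the composition rule twice on each side turns $(f^{\text{op}} \circ g^{\text{op}}) \circ h^{\text{op}}$ and $f^{\text{op}} \circ (g^{\text{op}} \circ h^{\text{op}})$ into $(h \circ (g \circ f))^{\text{op}}$ and $((h \circ g) \circ f)^{\text{op}}$, so associativity in $\mathcal{C}^{\text{op}}$ is just associativity in $\mathcal{C}$ with the order reversed. For identities, the identity on each object is unchanged, and the composition rule gives $1_A^{\text{op}} \circ f^{\text{op}} = (f \circ 1_A)^{\text{op}} = f^{\text{op}}$ together with $f^{\text{op}} \circ 1_B^{\text{op}} = (1_B \circ f)^{\text{op}} = f^{\text{op}}$, invoking the identity laws of $\mathcal{C}$.

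The single delicate point—the nearest thing to an obstacle—is the order reversal built into the rule $f^{\text{op}} \circ g^{\text{op}} = (g \circ f)^{\text{op}}$: one must flip the order consistently when passing back to $\mathcal{C}$, and in the associativity step check that the two bracketings land on the two sides of associativity in $\mathcal{C}$ after this flip. Once that bookkeeping is respected, every axiom for $\mathcal{C}^{\text{op}}$ reduces verbatim to the corresponding axiom for $\mathcal{C}$, so no genuinely new argument is required.
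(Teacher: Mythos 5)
Your proposal is correct, but it proves considerably more than the paper does: the paper offers no proof at all for this observation, which is a purely pictorial remark recording how a commutative triangle in $\mathcal{C}$ translates, under the definition of the opposite category, into the reversed triangle with diagonal $f^{\text{op}}\circ g^{\text{op}} = (g\circ f)^{\text{op}}$. The only substantive content of the statement itself is the item you settle in your second paragraph — that when $\text{cod}(f)=\text{dom}(g)$ in $\mathcal{C}$, the opposite arrows are composable in the reversed order and their composite is the opposite of $g\circ f$, which is exactly what the second figure depicts. Your remaining verifications (associativity, via the correct identification of $(f^{\text{op}}\circ g^{\text{op}})\circ h^{\text{op}}$ with $\bigl(h\circ(g\circ f)\bigr)^{\text{op}}$ and of $f^{\text{op}}\circ(g^{\text{op}}\circ h^{\text{op}})$ with $\bigl((h\circ g)\circ f\bigr)^{\text{op}}$, and the identity laws $1_A^{\text{op}}\circ f^{\text{op}} = (f\circ 1_A)^{\text{op}} = f^{\text{op}}$ and $f^{\text{op}}\circ 1_B^{\text{op}} = (1_B\circ f)^{\text{op}} = f^{\text{op}}$) are all accurate, and they fill a genuine gap in the paper: the definition of $\mathcal{C}^{\text{op}}$ is stated there without any check that the axioms of Definition \ref{def:categoria} hold, so your bookkeeping about the order reversal is precisely the argument the paper leaves implicit. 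In short, your route is sound and strictly more thorough than the source; the paper buys brevity by treating the observation as self-evident from the definition, while your version makes the well-definedness of the opposite category explicit.
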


\begin{definicion}[Subcategory]\label{def:subcat}
    Let $\mathcal{C}$ a category. A \emph{subcategory} $\mathcal{D}$  of $\mathcal{C}$ consists of a sub-collection $\textbf{Ob}(\mathcal{D})$ of $\textbf{Ob}(\mathcal{C})$ together with, for each  $A,B\in\textbf{Ob}(\mathcal{D})$, a subcollection $\mathcal{D}(A,B)$ of $\mathcal{C}(A,B)$, such that
    \begin{enumerate}
        \item 
            For each object  $A\in\textbf{Ob}(\mathcal{D})$, the identity arrow $\textbf{1}_A$ is in $\mathcal{D}$, that is, $\mathcal{D}$ is closed under identities.  
        \item
            For each pair of composable arrows $f\in \mathcal{D}(A,B)$ and $g\in \mathcal{D}(B,C)$, we have that $g\circ f$ is in $\mathcal{D}$, that is, $\mathcal{D}$ is closed under composition. 
    \end{enumerate}
    Also, $\mathcal{D}$ is said a full subcategory if 
    \[
        \mathcal{D}(A,B) = \mathcal{C}(A,B), 
    \]
   for all $A,B\in\textbf{Ob}(\mathcal{D})$. 
\end{definicion}

\begin{observacion}
   Consider the category $ \textbf{Set}_1 $ whose objects with sets and whose arrows between sets are injective functions. It's easy to see that $ \textbf{Set}_1 $ is a subcategory of \textbf{Set}.
\end{observacion}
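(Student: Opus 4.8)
The plan is to verify, one by one, the two conditions in the definition of subcategory (Definition~\ref{def:subcat}). First I would dispatch the two ``subcollection'' requirements, which hold essentially by construction. The objects of $\textbf{Set}_1$ are declared to be sets, exactly as in $\textbf{Set}$, so $\textbf{Ob}(\textbf{Set}_1)=\textbf{Ob}(\textbf{Set})$ is a subcollection (indeed all of it). Moreover, for any two sets $A,B$, an injective function $\func{f}{A}{B}$ is in particular a function $A\to B$, so $\textbf{Set}_1(A,B)\subseteq\textbf{Set}(A,B)$. This exhibits $\textbf{Set}_1$ as a candidate subcategory, leaving only closure under identities and under composition to check.

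For closure under identities, I would observe that for every set $A$ the identity function $\func{1_A}{A}{A}$ is injective: if $1_A(x)=1_A(y)$ then $x=y$, so $1_A\in\textbf{Set}_1(A,A)$. For closure under composition, I would take injective functions $\func{f}{A}{B}$ and $\func{g}{B}{C}$ and show that $g\circ f$ is injective. Suppose $(g\circ f)(x)=(g\circ f)(y)$; then $g(f(x))=g(f(y))$, so injectivity of $g$ gives $f(x)=f(y)$, and injectivity of $f$ then gives $x=y$. Hence $g\circ f\in\textbf{Set}_1(A,C)$, and both closure conditions of Definition~\ref{def:subcat} are satisfied.

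The only point demanding an actual argument is closure under composition, namely that the composite of two injections is again an injection; the object inclusion, the hom-set inclusion, and the identity condition are all immediate from the definitions. Since the stability of injectivity under composition is a standard elementary fact about functions, I expect no genuine obstacle here, and the verification is complete once both items of Definition~\ref{def:subcat} have been confirmed. I would close by remarking that $\textbf{Set}_1$ is \emph{not} a full subcategory, since for most pairs $A,B$ there exist non-injective functions $A\to B$, so that $\textbf{Set}_1(A,B)\subsetneq\textbf{Set}(A,B)$.
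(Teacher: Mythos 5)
Your proposal is correct and simply fills in the routine verification that the paper leaves implicit when it says the claim is ``easy to see'': the hom-set inclusion, closure under identities (the identity function is injective), and closure under composition (a composite of injections is an injection) are exactly the checks required by Definition~\ref{def:subcat}. Your closing remark that $\textbf{Set}_1$ is not a \emph{full} subcategory is a correct and pertinent addition, consistent with the paper's definition of fullness.
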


\begin{ejemplo}
  The \textbf{CPS} category of probability spaces and bounded functions, defined in  \ref{def:catCPS}, is a full subcategory of the \textbf{CMS} category of measured spaces and bounded functions. It is easy to see that the conditions of the definition~\ref{def:subcat} are satisfied and that $\textbf{CMS} ( (X,\mathcal{F}_X,\mathbb{P}_X),(Y,\mathcal{F}_Y,\mathbb{P}_Y)  ) = \textbf{CPS}((X,\mathcal{F}_X,\mathbb{P}_X),(Y,\mathcal{F}_Y,\mathbb{P}_Y) )$ for each  $(X,\mathcal{F}_X,\mathbb{P}_X)$, $(Y,\mathcal{F}_Y,\mathbb{P}_Y)\in\textit{\textbf{Ob}}(\textbf{CPS})$. 
\end{ejemplo}

In many of the categories that we have defined, the collection of all objects is too large to form a set. This topic will not be deepened, since it is not the objective of this work; however, consider the following definition:

\begin{definicion}
   A $ \mathcal{C} $ category is named \emph{small} if both the $ \textbf{Ob} (\mathcal{C}) $ collection of $ \mathcal{C} $ objects and the $ \boldsymbol{\mathcal{A}} (\mathcal{C}) $ of arrows of $ \mathcal{C} $ are sets. Otherwise $ \mathcal{C} $ is called  \emph{large}.
    
     A category $ \mathcal{C} $ is \emph{locally small} if for every $ A, B \in\textbf{Ob} (\mathcal{C}) $, the collection $ \mathcal{C} (A, B) $ is a set. 
\end{definicion}

\begin{observacion}
    If a category is small, then it is locally small.
\end{observacion}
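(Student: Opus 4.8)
The plan is to show that, for each pair of objects, the hom-collection $\mathcal{C}(A,B)$ is a sub-collection of the arrow collection $\boldsymbol{\mathcal{A}}(\mathcal{C})$ singled out by a well-defined property, and then to invoke the separation axiom of set theory. First I would recall that, by Definition~\ref{def:categoria}, every arrow $f\in\boldsymbol{\mathcal{A}}(\mathcal{C})$ is assigned a well-defined domain $\text{dom}(f)$ and codomain $\text{cod}(f)$. Fixing arbitrary $A,B\in\textbf{Ob}(\mathcal{C})$, the hom-collection is then precisely
\[
    \mathcal{C}(A,B) = \{\, f\in\boldsymbol{\mathcal{A}}(\mathcal{C}) : \text{dom}(f)=A \text{ and } \text{cod}(f)=B \,\}.
\]

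Next I would use the hypothesis that $\mathcal{C}$ is small, which in particular gives that $\boldsymbol{\mathcal{A}}(\mathcal{C})$ is a set. Since $\mathcal{C}(A,B)$ is obtained from the set $\boldsymbol{\mathcal{A}}(\mathcal{C})$ by the predicate ``$\text{dom}(f)=A$ and $\text{cod}(f)=B$'', the axiom schema of separation guarantees that $\mathcal{C}(A,B)$ is itself a set. Because $A$ and $B$ were arbitrary, this conclusion holds for every pair of objects, and hence $\mathcal{C}$ is locally small by definition.

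There is no genuine obstacle here; the only point requiring care is that the condition defining $\mathcal{C}(A,B)$ is a legitimate, well-defined property of arrows, which is exactly what the stipulation in Definition~\ref{def:categoria} that domain and codomain assign a \emph{unique} object to each arrow provides. Once this is granted, local smallness follows immediately from separation, and one notes in passing that the assumption on $\textbf{Ob}(\mathcal{C})$ being a set is not even needed for this implication.
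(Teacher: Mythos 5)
Your proof is correct: the paper states this observation without any proof, and your argument --- that $\mathcal{C}(A,B)$ is the subcollection of the set $\boldsymbol{\mathcal{A}}(\mathcal{C})$ cut out by the predicate $\text{dom}(f)=A$ and $\text{cod}(f)=B$, hence a set by separation --- is exactly the standard justification the paper leaves implicit. Your closing remark is also accurate (and in fact, since $A\mapsto 1_A$ injects objects into arrows, smallness of $\boldsymbol{\mathcal{A}}(\mathcal{C})$ alone already forces $\textbf{Ob}(\mathcal{C})$ to be a set as well).
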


\begin{observacion}
     We will denote by \textbf{Cat} the category of all small categories and by \textbf{LCat} the category of all locally small categories.
\end{observacion}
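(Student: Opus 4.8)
The statement is notational, but implicit in calling \textbf{Cat} and \textbf{LCat} \emph{categories} is the claim that each satisfies the axioms of Definition~\ref{def:categoria}; the plan is to verify exactly this. For \textbf{Cat} I would take as objects all small categories, as arrows the functors between them (the structure-preserving maps of categories, introduced in the sequel), as composition the composition of functors, and as identity on an object $\mathcal{C}$ the identity functor $1_{\mathcal{C}}$. The same data, with ``small'' replaced by ``locally small'', is proposed for \textbf{LCat}. So the task reduces to checking that these choices of objects, arrows, composition and identities obey the five clauses of Definition~\ref{def:categoria}.

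First I would supply the identity and composition data. The identity functor $1_{\mathcal{C}}$ on a category $\mathcal{C}$ is itself a functor, so every object carries an identity arrow. Given functors $F\colon\mathcal{C}\to\mathcal{D}$ and $G\colon\mathcal{D}\to\mathcal{E}$ whose target and source match, I would verify that the composite assignment $G\circ F$ again sends identities to identities and respects composition, hence is a functor $\mathcal{C}\to\mathcal{E}$. This is the direct analogue of the earlier observations that the composition of measurable, bounded, or stochastic functions remains in its class, and it guarantees that composition is a well-defined operation on the arrows of \textbf{Cat} and of \textbf{LCat}.

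Next I would dispatch the two equational axioms. Associativity $H\circ(G\circ F)=(H\circ G)\circ F$ reduces to associativity of the underlying object-level and arrow-level assignments, which are ordinary functions, so this is just the associativity of function composition already used for \textbf{Set}. The identity laws $F\circ 1_{\mathcal{C}}=F$ and $1_{\mathcal{D}}\circ F=F$ hold because $1_{\mathcal{C}}$ and $1_{\mathcal{D}}$ act as identity functions on the relevant object and arrow collections. Both checks are routine and mirror the verifications already carried out for \textbf{CPS} and \textbf{Stoch}.

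The genuine subtlety, and the step I expect to be the main obstacle, is foundational rather than computational: one must confirm that the proposed objects and arrows constitute \emph{legitimate} collections in the sense of the size distinctions just introduced. The class of all small categories is not itself a set, so \textbf{Cat} is a \emph{large} category; what rescues the construction is that for small $\mathcal{C},\mathcal{D}$ a functor is determined by functions between the set-sized object and arrow collections, whence each $\textbf{Cat}(\mathcal{C},\mathcal{D})$ is a set and \textbf{Cat} is locally small. For \textbf{LCat} the objects form a yet larger class and the hom-collections need no longer be sets, so the most I would claim is that \textbf{LCat} is a well-defined large category. Making these size considerations fully precise is exactly what the paper flags as ``not the objective of this work,'' so I would record them only to the extent needed to justify the word ``category'' in the statement.
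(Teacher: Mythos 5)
Your proposal is correct, and it matches the only content the paper itself attaches to this statement: the paper offers no proof at all here, treating the remark as pure notation, and only in a later observation (after functors are defined) does it record precisely the data you chose --- composition in \textbf{Cat} and \textbf{LCat} is the composition functor, and the identity arrows are the identity functors $1_{\mathcal{C}}$. Your verification of the axioms of Definition~\ref{def:categoria} (closure of functor composition, associativity via associativity of the underlying object- and arrow-level functions, identity laws via $1_{\mathcal{C}}$) is the routine expansion of that data, analogous to the paper's explicit checks for \textbf{CPS} and \textbf{Stoch}. Where you go beyond the paper is the size discussion, and your instincts there are sound: \textbf{Cat} is large but locally small since a functor between small categories is a pair of functions between sets, while for \textbf{LCat} the hom-collections can fail to be sets (a functor out of a category with a proper class of objects is a class function, so even stating \textbf{LCat} rigorously needs classes of classes or universes). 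The paper explicitly declines to deepen exactly this point, so flagging it without full formalization, as you do, is the right calibration; your one caveat to keep in mind is that calling \textbf{LCat} ``a well-defined large category'' is itself foundation-dependent, which your phrasing already hedges appropriately.
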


\begin{ejemplo}
    \begin{enumerate}
        \item 
           The \textbf{Set} category of sets is a large category.
        \item
          The \textbf{CPS} s a locally small category. Indeed, for each $\overline{X}=(X,\mathcal{F}_X,\mathbb{P}_X)$ and $\overline{Y}=(Y,\mathcal{F}_Y,\mathbb{P}_Y)\in\textit{\textbf{Ob}}(\textbf{CPS})$, we have that 
            \[
                \textbf{CPS}(\overline{X},\overline{Y}) \subset Y^X, 
            \]
            where $Y^X$ represents the set of functions from the set $X$ to the set $Y$. 
        \item
            Similarly, \textbf{Set} is a locally small category
    \end{enumerate}
\end{ejemplo}

\begin{definicion}
    Let $\mathcal{C}$ be a category and $I$, $T\in\textbf{Ob}(\mathcal{C})$ objects. $ I $ is said to be \emph{initial} if, for every $A\in\textbf{Ob}(\mathcal{C})$, there is exactly one arrow $\func{f}{I}{A}$ in $\mathcal{C}$. 
    
    On the other hand, $ T $ is said \emph{terminal} if, for every, $A\in\textbf{Ob}(\mathcal{C})$, there is exactly one arrow $\func{f}{A}{T}$ in $\mathcal{C}$.
\end{definicion}

\begin{observacion}
    We see that the empty set is an initial object of \textbf{Set}. Also, any set with an element, that is, a singlet, is a terminal object. Indeed, for each $A\in\textit{\textbf{Ob}}(\textbf{Set})$, there is a unique function such that 
    \[
        \funcion{f}{A}{\{*\}}{x}{f(x)=*.}
    \]
\end{observacion}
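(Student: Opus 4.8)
The plan is to verify the two universal properties directly from the definition of initial and terminal objects in \textbf{Set}. For the empty set $\emptyset$ I would establish that for every set $A$ there exists exactly one function $\func{f}{\emptyset}{A}$, and for a singleton $\{*\}$ that for every set $A$ there exists exactly one function $\func{f}{A}{\{*\}}$. In both cases the argument splits into an existence part (exhibiting an arrow) and a uniqueness part (showing any two candidates coincide).

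First, for the initial object, I would recall that a function from $\emptyset$ to $A$ is, set-theoretically, a subset of $\emptyset\times A$ satisfying the functional condition. Since $\emptyset\times A=\emptyset$, the only candidate is the empty set of ordered pairs, the so-called empty function. I would check that it satisfies the definition of a function vacuously, as there is no element of $\emptyset$ at which the functional condition could fail; this yields existence. Uniqueness is then immediate, since any two functions out of $\emptyset$ are both equal to the empty set of pairs.

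Second, for the terminal object, let $A$ be any set and $\{*\}$ the singleton. Any function $\func{g}{A}{\{*\}}$ must satisfy $g(x)\in\{*\}$ for every $x\in A$, which forces $g(x)=*$. This simultaneously gives existence, since the constant assignment $x\mapsto *$ is a well-defined function, and uniqueness, since every function into $\{*\}$ necessarily coincides with it.

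The main obstacle is conceptual rather than technical: it lies in the argument for the empty function, where one must be comfortable with the functional condition holding vacuously over an empty domain, a point that often feels unnatural at first. Once this is granted, both parts reduce to routine verifications, the terminal case being essentially the displayed constant map already indicated in the statement.
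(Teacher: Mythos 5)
Your proof is correct and follows essentially the same route as the paper: the terminal case is exactly the paper's displayed constant map $x\mapsto *$, forced by $g(x)\in\{*\}$. For the initial case the paper merely asserts the fact, so your explicit verification via $\emptyset\times A=\emptyset$ and the vacuously functional empty relation simply supplies the detail the paper leaves tacit.
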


\section{Funtor}

A functor is an arrow between categories. Formally, we have the following definition:

\begin{definicion}[Functor]\label{def:functor}
   Let $\mathcal{C}$ and $\mathcal{D}$ two categories, a \emph{covariant funtor} $\func{F}{\mathcal{C}}{\mathcal{D}}$ it consists of: 
    \begin{itemize}
        \item 
            A function 
            \[
                \funcion{F}{\textbf{Ob}(\mathcal{C})}{\textbf{Ob}(\mathcal{D})}{A}{F(A),}
            \]
        \item
            For all $A,B\in\textbf{Ob}(\mathcal{C})$, a function
            \[
                \funcion{F}{\mathcal{C}(A,B)}{\mathcal{D}(F(A),F(B))}{f}{F(f).}
            \]
            such that
            \[
                F(g\circ f) = F(g)\circ F(F)\quad\text{y}\quad F(\textbf{1}_A) = \textbf{1}_{F(A),}
            \]
           for any pair of composable arrows $f,g\in\boldsymbol{\mathcal{A}}(\mathcal{C})$ and for any object $A\in\textbf{Ob}(\mathcal{C})$. 
            
            On the other hand, if $\func{F}{\mathcal{C}}{\mathcal{D}}$ is such that 
            \[
                F(g\circ f) = F(f)\circ F(g)\quad\text{y}\quad F(\textbf{1}_A) = \textbf{1}_{F(A),}
            \]
           then $ F $ is said to be a \emph{contravariant functor}. 
    \end{itemize}
\end{definicion}

\begin{observacion}
    Note that a contravariant functor can be defined as a covariant functor in the opposite category $\mathcal{C}$. That is, if  $\func{F}{\mathcal{C}}{\mathcal{D}}$ eis a contravariant functor, then we will write  $\func{F}{\mathcal{C}^{\text{op}}}{\mathcal{D}}$ and we'll call it functor. Thus, we will not make a distinction between covariant and contravariant functors. 
\end{observacion}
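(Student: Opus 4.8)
The plan is to establish a bijective correspondence between contravariant functors $\func{F}{\mathcal{C}}{\mathcal{D}}$ and covariant functors $\func{\tilde{F}}{\mathcal{C}^{\text{op}}}{\mathcal{D}}$, which is exactly the content of the observation. First I would fix a contravariant functor $F$ and define its associated assignment $\tilde{F}$ on $\mathcal{C}^{\text{op}}$: on objects set $\tilde{F}(A) = F(A)$ (recall that $\textbf{Ob}(\mathcal{C}^{\text{op}}) = \textbf{Ob}(\mathcal{C})$), and on an arrow $\func{f^{\text{op}}}{B}{A}$ of $\mathcal{C}^{\text{op}}$—which by definition of the opposite category corresponds to an arrow $\func{f}{A}{B}$ of $\mathcal{C}$—set $\tilde{F}(f^{\text{op}}) = F(f)$. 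Since $F$ is contravariant we have $\func{F(f)}{F(B)}{F(A)}$, and this is precisely an arrow $\func{\tilde{F}(f^{\text{op}})}{\tilde{F}(B)}{\tilde{F}(A)}$ of $\mathcal{D}$, so $\tilde{F}$ has the correct type on arrows.

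Next I would verify that $\tilde{F}$ satisfies the two axioms of a covariant functor from Definition~\ref{def:functor}. For composition, take composable arrows $f^{\text{op}}$ and $g^{\text{op}}$ in $\mathcal{C}^{\text{op}}$ and combine the composition rule $f^{\text{op}}\circ g^{\text{op}} = (g\circ f)^{\text{op}}$ of the opposite category with the contravariance identity $F(g\circ f) = F(f)\circ F(g)$, obtaining
\[
\tilde{F}(f^{\text{op}}\circ g^{\text{op}}) = \tilde{F}\bigl((g\circ f)^{\text{op}}\bigr) = F(g\circ f) = F(f)\circ F(g) = \tilde{F}(f^{\text{op}})\circ \tilde{F}(g^{\text{op}}),
\]
which is exactly the covariance identity. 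For identities, the identity arrows of $\mathcal{C}^{\text{op}}$ are the same as those of $\mathcal{C}$, so $\tilde{F}(\textbf{1}_A) = F(\textbf{1}_A) = \textbf{1}_{F(A)} = \textbf{1}_{\tilde{F}(A)}$.

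Finally I would run the construction in reverse: given a covariant functor $\func{G}{\mathcal{C}^{\text{op}}}{\mathcal{D}}$, define $F$ on $\mathcal{C}$ by $F(A) = G(A)$ and $F(f) = G(f^{\text{op}})$, and check by the identical computation that $F$ satisfies the contravariance identities. Because $(\mathcal{C}^{\text{op}})^{\text{op}} = \mathcal{C}$ and each assignment alters only the labelling of the direction of arrows while leaving the object and arrow data untouched, the two passages $F \mapsto \tilde{F}$ and $G \mapsto F$ are mutually inverse, yielding the claimed identification. I do not expect a genuine obstacle here; the only point that requires care is the bookkeeping of the direction reversal together with the composition convention of the opposite category, so that the contravariance identity $F(g\circ f) = F(f)\circ F(g)$ lines up correctly with the covariance identity once translated through $\mathcal{C}^{\text{op}}$.
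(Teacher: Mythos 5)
Your proposal is correct: the paper states this observation as a remark without any proof, and your verification---defining $\tilde{F}(f^{\text{op}}) = F(f)$, checking the covariance law via $f^{\text{op}}\circ g^{\text{op}} = (g\circ f)^{\text{op}}$ together with $F(g\circ f) = F(f)\circ F(g)$, and inverting the construction through $(\mathcal{C}^{\text{op}})^{\text{op}} = \mathcal{C}$---is exactly the standard argument the paper leaves implicit. Worth noting: you silently (and rightly) repair a gap in the paper's Definition~\ref{def:functor}, which for the contravariant case keeps the covariant typing $\func{F(f)}{F(A)}{F(B)}$ while imposing $F(g\circ f)=F(f)\circ F(g)$, an identity that only typechecks under your reading $\func{F(f)}{F(B)}{F(A)}$.
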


Again, we will first introduce classic functor examples. 

\begin{ejemplo}
    \begin{itemize}
        \item 
            Let $\func{F}{\mathcal{C}}{\mathcal{B}}$ y $\func{G}{\mathcal{B}}{\mathcal{D}}$ two funtors, the composition funtor $ G \circ F $ is defined by: 
            \begin{itemize}
                \item 
                    For objects: 
                    \[
                        \funcion{G\circ F}{\textit{\textbf{Ob}}(\mathcal{C})}{\textit{\textbf{Ob}}(\mathcal{D})}{A}{G(F(A)).}
                    \]
                \item
                    For arrows: 
                    \[
                        \funcion{G\circ F}{\boldsymbol{\mathcal{C}}(A,B)}{\boldsymbol{\mathcal{D}}(G(F(A)),G(F(B)))}{[\func{f}{A}{B}]}{[\func{G(F(f))}{G(F(A))}{G(F(B))}].}
                    \]
            \end{itemize}

        \item
            Let $\mathcal{C}$ be a category, the identity functor is defined, denoted by $\func{\textbf{1}_{\mathcal{C}}}{\mathcal{C}}{\mathcal{C}}$, as follows 
            \[
                \textbf{1}_{\mathcal{C}}(A) = A\quad\text{y}\quad \textbf{1}_{\mathcal{C}}(f) = f,  
            \]
            for every object $A\in\textit{\textbf{Ob}}(\mathcal{C})$nd for every arrow $f\in\boldsymbol{\mathcal{A}}(\mathcal{C})$.
        \item
           Let $ A, B $ be two preordered sets, a covariant functor between the corresponding categories is exactly an increasing function.
    \end{itemize}
\end{ejemplo}

The preceding example allows us to present the following observation:

\begin{observacion}
    Consider the category \textbf {Cat} and the category \textbf {LCat}, the composition of functors in these categories is defined as the composition functor and the identity arrows correspond to the identity functors.  
\end{observacion}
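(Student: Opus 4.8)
The plan is to verify that \textbf{Cat} and \textbf{LCat} satisfy every clause of Definition~\ref{def:categoria}, taking small (respectively locally small) categories as objects and functors as arrows. Most of the work has already been done in the preceding example: there the composite $G\circ F$ of two functors is exhibited as a functor again, and the identity functor $\func{\textbf{1}_{\mathcal{C}}}{\mathcal{C}}{\mathcal{C}}$ is shown to be a functor. Consequently, composition and identities are well-defined operations on arrows. Closure within \textbf{Cat} is automatic, since the domain and codomain of $G\circ F$ (resp. of $\textbf{1}_{\mathcal{C}}$) are themselves objects of \textbf{Cat}; the same remark applies verbatim to \textbf{LCat}.

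The first genuine point to establish is associativity. Given functors $\func{F}{\mathcal{A}}{\mathcal{B}}$, $\func{G}{\mathcal{B}}{\mathcal{C}}$ and $\func{H}{\mathcal{C}}{\mathcal{D}}$, I would show $(H\circ G)\circ F = H\circ(G\circ F)$ by comparing the two functors on objects and on arrows separately. Evaluating either composite at an object $A\in\textbf{Ob}(\mathcal{A})$ yields $H(G(F(A)))$, and evaluating either at an arrow $f$ yields $H(G(F(f)))$, directly from the object-part and arrow-part formulas of the composition functor. Since two functors coincide exactly when their object functions and their arrow functions agree, the identity follows; at bottom this is nothing but the associativity of composition of functions in \textbf{Set}, applied once on objects and once on each hom-set.

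Next come the identity laws. For a functor $\func{F}{\mathcal{C}}{\mathcal{D}}$ I would check $F\circ\textbf{1}_{\mathcal{C}} = F$ and $\textbf{1}_{\mathcal{D}}\circ F = F$. On objects, $(F\circ\textbf{1}_{\mathcal{C}})(A) = F(\textbf{1}_{\mathcal{C}}(A)) = F(A)$, and on arrows $(F\circ\textbf{1}_{\mathcal{C}})(f) = F(\textbf{1}_{\mathcal{C}}(f)) = F(f)$, using the definition of the identity functor; the computation for $\textbf{1}_{\mathcal{D}}\circ F$ is symmetric. Again the equality of functors reduces to the equality of their object and arrow components.

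The calculations here are entirely routine, so there is no real obstacle of a computational kind. The one point that deserves care is conceptual rather than technical: every equation above is an equality of functors, and it must be justified through the principle that a functor is determined by, and two functors are equal precisely when they agree on, their action on objects and on arrows. Making this criterion explicit is what renders both the associativity and the identity laws immediate. The only remaining subtlety is foundational, namely whether the collections of all small (resp. locally small) categories legitimately form the object-collections of \textbf{Cat} and \textbf{LCat}; as the text has already indicated, this size issue lies outside the scope of the present work and is deliberately set aside.
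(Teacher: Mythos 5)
Your proposal is correct and follows exactly the route the paper intends: the observation is stated without proof, relying on the preceding example where the composition functor $G\circ F$ and the identity functor $\func{\textbf{1}_{\mathcal{C}}}{\mathcal{C}}{\mathcal{C}}$ are constructed, and your componentwise verification of associativity and the identity laws (checking agreement on objects and on arrows, which is precisely the criterion for equality of functors) is the routine check the paper leaves implicit. Your deferral of the size question for \textbf{Cat} and \textbf{LCat} also matches the paper, which explicitly declines to pursue foundational issues.
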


On the other hand, examples of functors in the probabilistic context would be:

\begin{ejemplo}
   \begin{itemize}
       \item 
             Consider the category \textbf {CPS} and the category \textbf{Meas}, the forgetting functor is defined $\func{U}{\textbf{CPS}}{\textbf{Meas}}$ by:
            \begin{enumerate}
                \item 
                    For objects: 
                    \[
                        \funcion{\textbf{U}}{\textit{\textbf{Ob}}(\textbf{CPS})}{\textit{\textbf{Ob}}(\textbf{Meas})}{(X,\mathcal{F}_X,\mathbb{P}_X)}{(X,\mathcal{F}_X).}
                    \]
                \item
                    Let $\overline{X} = (X,\mathcal{F}_X,\mathcal{P}_X)$ and $\overline{Y} = Y,\mathcal{F}_Y,\mathcal{P}_Y)$ probability spaces, for arrows:
                    \[
                        \funcion{\textbf{U}}{\textbf{CPS}(\overline{X},\overline{Y})}{\textbf{Meas}((X,\mathcal{F}_X),(Y,\mathcal{F}_Y))}{[\func{f}{\overline{X}}{\overline{Y}}]}{[\func{f}{(X,\mathcal{F}_X)}{(Y,\mathcal{F}_Y)}].}
                    \]
                    In this case, the functor sends a probability space  $\overline{X}$ in a measurable space  $(X,\mathcal{F}_X)$ forgetting its probability measure.
                    
                     In general, it is not possible to formally define a forgetting functor; however, the idea of this functor is that \textit{forgets} some structure with respect to the domain category objects and arrows. 
            \end{enumerate}    
        \item
            The canonical normalization functor is defined between the categories \textbf{CMS} and \textbf{CPS}  $\func{\mathcal{N}}{\textbf{CMS}}{\textbf{CPS}}$ by: 
            \begin{enumerate}
                \item 
                    For objects: 
                    \[
                        \funcion{\mathcal{N}}{\textit{\textbf{Ob}}(\textbf{CMS})}{\textit{\textbf{Ob}}(\textbf{CPS})}{(X,\mathcal{F}_X,\mu)}{\left(X,\mathcal{F}_X,\frac{1}{\mu(X)} \mu\right).}
                    \]
                \item
                    Let $\overline{X} = (X,\mathcal{F}_X,\mu)$ and $\overline{Y} = (Y,\mathcal{F}_Y,\nu)$ finite measured spaces, for arrows:
                    \[
                        \funcion{\mathcal{N}}{\textbf{CMS}(\overline{X},\overline{Y})}{\textbf{CPS}\left(\left(X,\mathcal{F}_X,\frac{1}{\mu(X)}\mu\right),\left(Y,\mathcal{F}_Y,\frac{1}{\nu(Y)}\nu\right)\right)}{[\func{f}{\overline{X}}{\overline{Y}}]}{\left[\func{\mathcal{N}(f)}{\left(X,\mathcal{F}_X,\frac{1}{\mu(X)}\mu\right)}{\left(Y,\mathcal{F}_Y,\frac{1}{\nu(Y)}\nu\right)}\right].}
                    \]
                    We see that the functor  $\mathcal{N}$ sends a finite measured space in a probability space. Also, note that $\mathcal{N}(f)$ s a bounded function. Indeed, since $ f $ is a bounded function, we know that $ M> 0 $ exists such that $\mu(f^{-1}(F_Y))\leq M \nu(F_Y)$ for all  $F_Y\in\mathcal{F}_Y$. Thus, taking  $M_1 = \frac{\nu(Y)}{\mu(X)} M >0$, Thus, taking 
                    \[
                        \frac{1}{\mu(X)}\mu(f^{-1}(F_Y))\leq M_1 \frac{1}{\nu(Y)}\nu(F_Y),
                    \]
                   for all $F_Y\in\mathcal{F}_Y$.
            \end{enumerate}    
   \end{itemize}
\end{ejemplo}

We will introduce the Giry functor defined in ~\cite{GiryM}, for this, let's remember the \textbf{Meas} category of measurable spaces and measurable functions.

\begin{definicion}[Giry functor]\label{defi:FGiry}
   Let $(X,\mathcal{F}_X)$ and $(Y,\mathcal{F}_Y)$ measurable spaces. The Giry functor $\func{\mathcal{P}}{\textbf{Meas}}{\textbf{Meas}}$ is defined by: 
    \begin{enumerate}
        \item
            For objects: 
            \[
                \funcion{\mathcal{P}}{\textit{\textbf{Ob}}(\textbf{Meas})}{\textit{\textbf{Ob}}(\textbf{Meas})}{(X,\mathcal{F}_X)}{ \mathcal{P}(X,\mathcal{F}_X) := (\textbf{P}(X),\textbf{P}(\mathcal{F}_X)), }
            \]
            where $\textbf{P}(X)$  is the set of probability measures defined on $X$, that is,
            \[
                \textbf{P}(X):=\{\func{\mathbb{P}_X}{(X,\mathcal{F}_X)}{(\R,\mathcal{B}(\R))}: \mathbb{P}_X\; \text{is a probability measure}\}.
            \]
            On the other hand, to define the $\sigma-$algebra on  $\textbf{P}(X)$, sea $F_X\in\mathcal{F}_X$, consider the evaluation function defined by 
            \[
                \funcion{i_{F_X}}{\textbf{P}(X)}{[0,1]}{\mathbb{P}_X}{i_{F_X}(\mathbb{P}_X) = \mathbb{P}_X(F_X).}
            \]
           Consider the smallest $\sigma-$algebra such that the family of functions $\{i_{F_X}\}_{F_X\in\mathcal{F}_X}$ is measurable. So, we have to 
            \begin{align*}
                \textbf{P}(\mathcal{F}_X) 
                & := \sigma\left( \{i_{F_X}^{-1}(B): F_X\in\mathcal{F}_X\; \text{ y } B\in\mathcal{B}(\R)\} \right), \\
                & = \sigma\left(\{ \{\mathbb{P}_X\in\textbf{P}(X): \mathbb{P}_X(F_X)\in B\}: F_X\in\mathcal{F}_X\; \text{ y } B\in\mathcal{B}(\R) \}
                \right).
            \end{align*}
            With which, we obtain the measurable space $(\textbf{P}(X),\textbf{P}(\mathcal{F}_X))$.
        \item
            For arrows: 
            \[
                \funcion{\mathcal{P}}{\textbf{Meas}((X,\mathcal{F}_X),(Y,\mathcal{F}_Y))}{\textbf{Meas}((\textbf{P}(X),\textbf{P}(\mathcal{F}_X)),(\textbf{P}(Y),\textbf{P}(\mathcal{F}_Y)))}{[\func{f}{(X,\mathcal{F}_X)}{(Y,\mathcal{F}_Y)}]}{[ \func{\mathcal{P}(f)}{(\textbf{P}(X),\textbf{P}(\mathcal{F}_X))}{(\textbf{P}(Y),\textbf{P}(\mathcal{F}_Y))}], }
            \]
            where $\textbf{P}(f)$ is a function such that 
            \[
                \funcion{\mathcal{P}(f)}{(\textbf{P}(X),\textbf{P}(\mathcal{F}_X))}{(\textbf{P}(Y),\textbf{P}(\mathcal{F}_Y))}{\mathbb{P}_X}{\mathbb{P}_X\circ f^{-1},}
            \]
            where $\mathbb{P}\circ f^{-1}$ is the image of the measure $\mathbb{P}_X$ with respect to the function $f$. 
    \end{enumerate}
\end{definicion}

\begin{proposicion}
   The Giry functor is well defined.
\end{proposicion}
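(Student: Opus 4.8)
The plan is to unpack the phrase ``well defined'' into four verifications: that $\mathcal{P}$ assigns to each object a genuine measurable space, that for a measurable $f$ the assignment $\mathcal{P}(f)$ actually lands in $\textbf{P}(Y)$, that $\mathcal{P}(f)$ is measurable with respect to the $\sigma$-algebras $\textbf{P}(\mathcal{F}_X)$ and $\textbf{P}(\mathcal{F}_Y)$, and finally that $\mathcal{P}$ preserves identities and composition. The object part is immediate: $\textbf{P}(\mathcal{F}_X)$ is defined as the $\sigma$-algebra generated by the family $\{i_{F_X}\}_{F_X\in\mathcal{F}_X}$, so $(\textbf{P}(X),\textbf{P}(\mathcal{F}_X))$ is by construction a measurable space and nothing further is needed there.

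For the arrows, I would first check that the pushforward $\mathbb{P}_X\circ f^{-1}$, given by $(\mathbb{P}_X\circ f^{-1})(F_Y)=\mathbb{P}_X(f^{-1}(F_Y))$ for $F_Y\in\mathcal{F}_Y$, is a probability measure on $(Y,\mathcal{F}_Y)$. This rests on the elementary facts that the preimage operation commutes with complements and countable disjoint unions and that $f^{-1}(Y)=X$: nonnegativity and the value $1$ on $Y$ are immediate, while countable additivity follows because $f^{-1}$ carries a two-by-two disjoint family in $\mathcal{F}_Y$ to a two-by-two disjoint family in $\mathcal{F}_X$. Hence $\mathcal{P}(f)(\mathbb{P}_X)\in\textbf{P}(Y)$, so $\mathcal{P}(f)$ is a genuine map $\textbf{P}(X)\to\textbf{P}(Y)$.

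The main obstacle is the measurability of $\mathcal{P}(f)$, and I would handle it with the standard generating criterion: since $\textbf{P}(\mathcal{F}_Y)=\sigma\left(\{i_{F_Y}^{-1}(B):F_Y\in\mathcal{F}_Y,\ B\in\mathcal{B}(\R)\}\right)$, it suffices to show that $\mathcal{P}(f)^{-1}\left(i_{F_Y}^{-1}(B)\right)\in\textbf{P}(\mathcal{F}_X)$ for every such generator. The key computation is the identity
\[
    i_{F_Y}\circ\mathcal{P}(f) = i_{f^{-1}(F_Y)},
\]
which holds because, for each $\mathbb{P}_X\in\textbf{P}(X)$, one has $i_{F_Y}(\mathcal{P}(f)(\mathbb{P}_X))=(\mathbb{P}_X\circ f^{-1})(F_Y)=\mathbb{P}_X(f^{-1}(F_Y))=i_{f^{-1}(F_Y)}(\mathbb{P}_X)$, using that $f^{-1}(F_Y)\in\mathcal{F}_X$ by measurability of $f$. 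Consequently $\mathcal{P}(f)^{-1}\left(i_{F_Y}^{-1}(B)\right)=i_{f^{-1}(F_Y)}^{-1}(B)$, which is a generator of $\textbf{P}(\mathcal{F}_X)$ and hence lies in $\textbf{P}(\mathcal{F}_X)$. This establishes that $\mathcal{P}(f)$ is an arrow of $\textbf{Meas}$.

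Finally, functoriality reduces to two short calculations on pushforwards. For the identity, $\mathcal{P}(\mathrm{id}_X)(\mathbb{P}_X)=\mathbb{P}_X\circ\mathrm{id}_X^{-1}=\mathbb{P}_X$, so $\mathcal{P}(\mathrm{id}_X)=\mathrm{id}_{\textbf{P}(X)}$. For composition, given measurable $f$ and $g$, the relation $(g\circ f)^{-1}=f^{-1}\circ g^{-1}$ yields $\mathcal{P}(g\circ f)(\mathbb{P}_X)=\mathbb{P}_X\circ(g\circ f)^{-1}=(\mathbb{P}_X\circ f^{-1})\circ g^{-1}=\mathcal{P}(g)(\mathcal{P}(f)(\mathbb{P}_X))$, whence $\mathcal{P}(g\circ f)=\mathcal{P}(g)\circ\mathcal{P}(f)$. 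With these, $\mathcal{P}$ satisfies all the conditions of Definition~\ref{def:functor}, and I expect the only genuinely delicate step to be the measurability argument, the remaining parts being routine measure-theoretic bookkeeping.
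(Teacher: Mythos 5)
Your proof is correct and follows essentially the same route as the paper: measurability of $\mathcal{P}(f)$ via the generating criterion with the key identity $\mathcal{P}(f)^{-1}\left(i_{F_Y}^{-1}(B)\right)=i_{f^{-1}(F_Y)}^{-1}(B)$, and functoriality by direct computation with pushforwards. You are in fact slightly more complete than the paper, which leaves the checks that $\mathbb{P}_X\circ f^{-1}$ is a probability measure and that $\mathcal{P}(\mathrm{id}_X)=\mathrm{id}_{\textbf{P}(X)}$ implicit, but these additions do not change the argument.
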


\begin{proof}
    We will do the proof in two steps, first we will prove that for an arrow $\func{f}{(X,\mathcal{F}_X)}{(Y,\mathcal{F}_Y)}$ in \textbf{Meas}, the function $\func{\mathcal{P}(f)}{(\textbf{P}(X),\textbf{P}(\mathcal{F}_X))}{(\textbf{P}(Y),\textbf{P}(\mathcal{F}_Y))}$ is $\textbf{P}(\mathcal{F}_X)/\textbf{P}(\mathcal{F}_Y)-$ measurable. Next, we will prove that the Giry functor preserves compositions and identities. 
    
    \begin{enumerate}
        \item
            Let $A\in \{ i^{-1}_{F}(B) : F_Y\in\mathcal{F}_Y\; \text{ y }\; B\in\mathcal{B}(\R))\}$, we know there are $F_Y\in\mathcal{F}_Y$ and $B\in\mathcal{B}(\R)$ such that
            \[
                A = i^{-1}_{F}(B). 
            \]
            To prove that $\mathcal{P}(f)$ is $\textbf{P}(\mathcal{F}_X)/\textbf{P}(\mathcal{F}_Y)-$measurable, just show that $(\mathcal{P}(f))^{-1} ( i^{-1}_{F}(B)) \in \sigma(\{i_{F_X}^{-1}(B): F_X\in\mathcal{F}_X\; \text{ y } B\in\mathcal{B}(\R)\})$. Indeed, we have to 
            \begin{align*}
                (\mathcal{P}(f))^{-1} ( i^{-1}_{F_Y}(B))
                & = \{ \mathbb{P}_X\in \textbf{P}(X): \mathcal{P}(f)(\mathbb{P}_X) \in i^{-1}_{F_Y}(B) \}, \\
                & = \{ \mathbb{P}_X\in \textbf{P}(X): \mathbb{P}_X \circ f^{-1} \in i^{-1}_{F_Y}(B) \}, \\
                & = \left\lbrace \mathbb{P}_X\in \textbf{P}(X): i_{F_Y} \left(\mathbb{P}_X\circ f^{-1}\right) \in B  \right\rbrace, \\
                & = \left\lbrace \mathbb{P}_X\in \textbf{P}(X):  \mathbb{P}_X \left(f^{-1} (F_Y) \right)\in B \right\rbrace, \\
                & = i^{-1}_{f^{-1}(F_Y)}(B), 
            \end{align*}
           from where, given that $f^{-1}(F_Y)\in\mathcal{F}_X$, it follows that 
            \[
                 (\mathcal{P}(f))^{-1} ( i^{-1}_{F_Y}(B)) \in \sigma(\{i_{F_X}^{-1}(B): F_X\in\mathcal{F}_X\; \text{ y } B\in\mathcal{B}(\R)\}). 
            \]
                   
        \item
           
            \begin{itemize}
                \item 
                    \textbf{Associativity:}
                   Let $\func{f}{(X,\mathcal{F}_X)}{(Y,\mathcal{F}_Y)}$ and $\func{g}{(Y,\mathcal{F}_Y)}{(Z,\mathcal{F}_Z)}$ composable arrows in \textbf{Meas}, we are going to show that
            \[
                \mathcal{P}(g\circ f) = \mathcal{P}(g)\circ\mathcal{P}(f).
            \]
            
           On the one hand, let $\mathbb{P}_X\in\textbf{P}(X)$, aapplying the definition of $\mathcal{P}$ for the arrow $\func{g\circ f}{(X,\mathcal{F}_X)}{(Z,\mathcal{F}_Z)}$, we have  
            \begin{equation*}
                \mathcal{P}(g\circ f)(\mathbb{P}_X) = \mathbb{P}_X\circ (g\circ f)^{-1}. 
            \end{equation*}
            
            Now, let $F_Z\in\mathcal{F}_Z$, by definition of the image of the measure $\mathbb{P}_X$ with respect to the function $g\circ f$, we know that  
            \[
                \left(\mathbb{P}_X\circ (g\circ f)^{-1}\right) (F_Z) =\mathbb{P}_X\left( (g\circ f)^{-1} (F_Z) \right), 
            \]
            from where, we have to
            \begin{equation}\label{eqn:prop3.2}
                \mathbb{P}_X\left( (g\circ f)^{-1} (F_Z) \right) = \mathbb{P}_X \left( f^{-1}(g^{-1}(F_Z)) \right).
            \end{equation}
        
            On the other hand, it follows that 
            \[
                \mathcal{P}(f)(\mathbb{P}_X) = \mathbb{P}_X\circ f^{-1}, 
            \]
            with which, given that  $\mathbb{P}_X\circ f^{-1}\in\textbf{P}(Y)$, it follows that 
            \begin{equation*}
                \mathcal{P}(g) \left( \mathbb{P}_X\circ f^{-1} \right) = \left(\mathbb{P}_X\circ f^{-1}\right)\circ g^{-1}.  
            \end{equation*}
          Then, let $F_Z\in\mathcal{F}_Z$, by definition of the pushforward measure, we have that
            \begin{align*}
               \left( \left(\mathbb{P}_X\circ f^{-1}\right)\circ g^{-1}\right) (F_Z) = (\mathbb{P}_X\circ f^{-1})(g^{-1}(F_Z)),
            \end{align*}
            with this, again applying the  definition of the pushforward measure, we obtain that
            \begin{equation}\label{eqn:prop3.2_01}
                \left( \left(\mathbb{P}_X\circ f^{-1}\right)\circ g^{-1}\right) (F_Z) = \mathbb{P}_X \left( f^{-1}(g^{-1})(F_Z) \right).
            \end{equation}

            Combining \eqref{eqn:prop3.2} and \eqref{eqn:prop3.2_01}, we obtain that 
            \[
                \left(\mathbb{P}_X\circ (g\circ f)^{-1}\right) (F_Z) = \left( \left(\mathbb{P}_X\circ f^{-1}\right)\circ g^{-1}\right) (F_Z), 
            \]
            for each $F\in\mathcal{F}_Z$. 
            \item
                The laws of identity are proven in the same way as associativity. 
            \end{itemize}
    \end{enumerate}
\end{proof}

\section{Natural transformations}

Natural transformations are a notion of ``map between functors'' and it applies when the functors have the same domain and codomain. Formally, we have the following definition:

\begin{definicion}[Natural transformation]\label{def:transformacionatural}
    Let $\mathcal{C}$, $\mathcal{D}$ categories and $\func{F}{\mathcal{C}}{\mathcal{D}}$, $\func{G}{\mathcal{C}}{\mathcal{D}}$ functors. A \emph{natural transformation} $\func{\alpha}{F}{G}$ is a family \newline $\left(\func{\alpha_A}{F(A)}{G(A)}\right)_{A\in\textbf{Ob}(\mathcal{C})}$ of arrows in  $\mathcal{D}$ such that, for each arrow $\func{f}{A}{B}$ in $\mathcal{C}$, the diagram
    
    \begin{figure}[h]
    \centering
            \begin{pspicture}[showgrid=false](0,0)(3,4.5)
                \rput(0,4){\rnode{A}{$F(A)$}}
                \rput(3,4){\rnode{B}{$F(B)$}}
                \rput(0,1){\rnode{C}{$G(A)$}}
                \rput(3,1){\rnode{D}{$G(B)$}}
            \ncline[nodesep=4pt]{->}{A}{B}\naput{$F(f)$}
            \ncline[nodesep=4pt]{->}{C}{D}\nbput{$G(f)$}
            \ncline[nodesep=4pt]{->}{A}{C}\nbput{$\alpha_A$}
            \ncline[nodesep=4pt]{->}{B}{D}\naput{$\alpha_B$}
            \end{pspicture}
        \label{fig:transformacionN}
    \end{figure}
    
    commutes, that is, 
    \begin{equation}\label{eqn:naturalidad}
        \alpha_B \circ F(f) = G(f)\circ \alpha_A.
    \end{equation}
    
    The arrows $ \alpha_A $ are called the components of the natural transformation $ \alpha $.
    
\end{definicion}

\begin{observacion}
    The definition of natural transformation ~\ref{def:transformacionatural} states that for each arrow $\func{f}{A}{B}$ in $\mathcal{C}$ it is possible to construct exactly one arrow $\func{}{F(A)}{G(B)}$ in $\mathcal{D}$.
    
     Furthermore, the identity  \eqref{eqn:naturalidad} is called the naturality condition. 
\end{observacion}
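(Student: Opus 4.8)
The plan is to read off the unique arrow $F(A)\to G(B)$ directly from the naturality square of Definition~\ref{def:transformacionatural}, using the commutativity condition to rule out any ambiguity in its construction.

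First I would fix an arrow $\func{f}{A}{B}$ in $\mathcal{C}$ and list the four boundary arrows of the corresponding square in $\mathcal{D}$: the images $\func{F(f)}{F(A)}{F(B)}$ and $\func{G(f)}{G(A)}{G(B)}$ under the two functors, together with the components $\func{\alpha_A}{F(A)}{G(A)}$ and $\func{\alpha_B}{F(B)}{G(B)}$ of $\alpha$. Since $\mathcal{D}$ is a category, its arrows are closed under composition, so \emph{a priori} there are two arrows from $F(A)$ to $G(B)$: the composite $\alpha_B\circ F(f)$, obtained by traversing the top edge and then the right edge, and the composite $G(f)\circ\alpha_A$, obtained by traversing the left edge and then the bottom edge.

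Next I would invoke the naturality condition~\eqref{eqn:naturalidad}, which states precisely that
\[
    \alpha_B\circ F(f) = G(f)\circ\alpha_A.
\]
This single identity forces the two candidate composites to coincide, so the arrow $F(A)\to G(B)$ that one builds along the square does not depend on which path is chosen; we may therefore legitimately speak of \emph{the} diagonal of the square. This yields existence and uniqueness simultaneously, which is exactly the content of the observation.

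The step that deserves care is interpreting the phrase ``exactly one arrow'': the claim is not that $\mathcal{D}(F(A),G(B))$ is a singleton, but that the two canonical ways of producing an arrow $F(A)\to G(B)$ from $f$ agree. With that reading the statement is simply a reformulation of~\eqref{eqn:naturalidad}, so I do not expect any genuine obstacle; the only point to make explicit is that commutativity of the diagram is precisely what collapses the two composites into a single well-defined arrow.
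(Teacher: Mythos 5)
Your reading is correct and matches the paper's intent exactly: the observation is a direct unpacking of Definition~\ref{def:transformacionatural}, where the naturality condition~\eqref{eqn:naturalidad} identifies the two composites $\alpha_B\circ F(f)$ and $G(f)\circ\alpha_A$, so the diagonal arrow $F(A)\to G(B)$ is canonically well defined. The paper offers no separate argument beyond this, and your explicit caveat that ``exactly one arrow'' does not mean $\mathcal{D}(F(A),G(B))$ is a singleton is the right clarification.
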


We can see that natural transformations are a type of arrow, so we would expect to be able to compose them. To do this, let  $\mathcal{C},\mathcal{D}$ categories, $\func{F,G,H}{\mathcal{C}}{\mathcal{D}}$ functors and $\func{\alpha}{F}{G}$, $\func{\beta}{G}{H}$ natural transformations. The natural compound transformation  $\func{\beta\circ\alpha}{F}{H}$ is defined by 
\[
    (\beta\circ \alpha)_A = \beta_A\circ \alpha_A, 
\]
for each  $A\in\textit{\textbf{Ob}}(\mathcal{C})$. We can also consider the natural transformation identity $\func{1_{F}}{F}{F}$, given by 
\[
    (1_F)_A = 1_{F(A)},
\]
for each  $A\in\textit{\textbf{Ob}}(\mathcal{C})$.With this, for two categories $\mathcal{C}$ and $\mathcal{D}$, there is a category whose objects are the functors from category $\mathcal{C}$ to category  $\mathcal{D}$ and whose arrows between functors are natural transformations. We will denote this category by  $[\mathcal{C},\mathcal{D}]$ and call it the functor category from  $\mathcal{C}$ a $\mathcal{D}$.

\begin{ejemplo}\label{ejem:deltaDirac}
    Let $(X,\mathcal{F}_X)$ be a measurable space and the function $\alpha_{(X,\mathcal{F}_X)}$ defined by
    \[
        \funcion{\alpha_{(X,\mathcal{F}_X)}}{(X,\mathcal{F}_X)}{(\textbf{P}(X),\textbf{P}(\mathcal{F}_X))}{x}{\alpha_X(x) = \delta_x,} 
    \]
    where $\delta_x$ is the Dirac measure at the point  $x\in X$.
    
    Also, let $\func{1_{\textbf{Meas}}}{\textbf{Meas}}{\textbf{Meas}}$ be the identity functor in \textbf{Meas} and the Giry functor $\func{\mathcal{P}}{\textbf{Meas}}{\textbf{Meas}}$.
    
     We have that $\func{\alpha}{1_{\textbf{Meas}}}{\mathcal{P}}$ is a natural transformation. We will do the demonstration of the fact that $ \alpha $ is a natural transformation in two steps: 
    
    \begin{enumerate}
        \item 
            Let $(X,\mathcal{F}_X)\in\textit{\textbf{Ob}}(\textbf{Meas})$ and object in \textbf{Meas}, we are going to show that  \newline $\func{\alpha_{(X,\mathcal{F}_X)}}{(X,\mathcal{F}_X)}{(\textbf{P}(X),\textbf{P}(\mathcal{F}_X))}$ is a  $\mathcal{F}_X/\textbf{P}(\mathcal{F}_X)-$meausurable function. 
            
            For this, let $A\in \{ i^{-1}_{F_X}(B) : F_X\in\mathcal{F}_X\; \text{ y }\; B\in\mathcal{B}(\R))\}$, we know there are  $F_X\in\mathcal{F}_X$ and $B\in\mathcal{B}(\R)$ such that
            \[
                A = i^{-1}_{F_X}(B). 
            \]
             We have to
            \begin{align}\label{eqn:medtransnatural}
                \alpha_{(X,\mathcal{F}_X)}^{-1}(A)
                & = \alpha_{(X,\mathcal{F}_X)}^{-1} \left( i^{-1}_{F_X}(B) \right), \notag \\
                & = \{x\in X: \alpha(x) \in i^{-1}_{F_X}(B)  \}, \notag\\
                & = \{x\in X: i_{F_X}(\alpha_x) \in B \},\notag \\
                & = \{x\in X: \alpha_x (F_X) \in B \}. 
            \end{align}
            
           On the other hand, we know that
            \[
                \alpha_x (F_X) = 
                \begin{cases}
                    1 \quad & \text{si} \quad x\in F_X, \\
                    0 \quad & \text{si} \quad x\in F_X^C. 
                \end{cases}
            \]
            Thus, together with  \eqref{eqn:medtransnatural}, let's consider the following cases: 
            \begin{enumerate}
                \item 
                    If $\{1\}\in B$, then $\alpha_{(X,\mathcal{F}_X)}^{-1}(A) = F_X$. 
                \item
                    If $\{0\}\in B$, then $\alpha_{(X,\mathcal{F}_X)}^{-1}(A) = F_X^C$.
                \item
                    If $\{0,1\}\in B$, then $\alpha_{(X,\mathcal{F}_X)}^{-1}(A) = X$. 
                \item
                     If $\{0,1\}\notin B$, then $\alpha_{(X,\mathcal{F}_X)}^{-1}(A) = \emptyset$. 
            \end{enumerate}
            We can see that in all cases we have that $\alpha_{(X,\mathcal{F}_X)}^{-1}(A)\in\mathcal{F}_X$. Therefore, we can conclude that  $\func{\alpha_{(X,\mathcal{F}_X)}}{(X,\mathcal{F}_X)}{(\textbf{P}(X),\textbf{P}(\mathcal{F}_X))}$ is a $\mathcal{F}_X/\textbf{P}(\mathcal{F}_X)-$ measurable function. 
        \item

    Now, we will prove that the naturality condition is satisfied. Indeed, since $1_{\textbf{Meas}}(X,\mathcal{F}_X) = (X,\mathcal{F}_X)$ for every object $(X,\mathcal{F}_X)\in\textit{\textbf{Ob}}(\textbf{Meas})$ and $1_{\textbf{Meas}}(f) = f$  all arrow $f\in\boldsymbol{\mathcal{A}}(\textbf{Meas})$, it must be proved that the following diagram 

    \begin{figure}[h]
    \centering
     \psset{unit=1.3cm}
            \begin{pspicture}[showgrid=false](0,0.5)(3,4.5)
                \rput(0,4){\rnode{A}{$(X,\mathcal{F}_X)$}}
                \rput(3,4){\rnode{B}{$(Y,\mathcal{F}_Y)$}}
                \rput(0,1){\rnode{C}{$(\textbf{P}(X),\textbf{P}(\mathcal{F}_X))$}}
                \rput(3,1){\rnode{D}{$(\textbf{P}(Y),\textbf{P}(\mathcal{F}_Y))$}}
            \ncline[nodesep=4pt]{->}{A}{B}\naput{$f$}
            \ncline[nodesep=4pt]{->}{C}{D}\nbput{$\mathcal{P}(f)$}
            \ncline[nodesep=4pt]{->}{A}{C}\nbput{$\alpha_X$}
            \ncline[nodesep=4pt]{->}{B}{D}\naput{$\alpha_Y$}
            \end{pspicture}
    \end{figure}
    
    commutes, for which, we are going to verify that
    \[
        \alpha_Y\circ f = \mathcal{P}(f)\circ \alpha_X.
    \]
   Let $ x \in X $, on the one hand, we have
    \begin{align*}
         (\alpha_Y\circ f)(x)
         & = \alpha_Y (f(x)) \\
         & = \delta_{f(x)}, 
    \end{align*}
    from where, for each $ A \in \mathcal{F}_Y $, by definition of the Dirac measure at the point $ f (x) \in A $, we obtain that
    \begin{equation}\label{eqn:ejemplo11_01}
        \delta_{f(x)} (A)
        = 
        \begin{cases}
            1 \quad & \text{si} \quad f(x)\in A , \\
            0 \quad & \text{si} \quad f(x)\in A^C.
        \end{cases}
        = \begin{cases}
            1 \quad & \text{si} \quad x\in f^{-1}(A) , \\
            0 \quad & \text{si} \quad x\in f^{-1}(A^C).
        \end{cases}
    \end{equation}
    
    On the other hand, we have to
    \begin{align*}
         \left(\mathcal{P}(f)\circ \alpha_X\right)(x)
         & = \mathcal{P}(f)(\delta_x) , \\
         & = \delta_x\circ f^{-1}, 
    \end{align*}
    from where, for each $ A \in \mathcal{F}_Y $, it follows that
    \begin{equation}\label{eqn:ejemplo11_02}
        \left(\delta_{x} \circ f^{-1}\right) (A) = \delta_x(f^{-1}(A))
        = 
        \begin{cases}
            1 \quad & \text{if} \quad x\in f^{-1}(A) , \\
            0 \quad & \text{if} \quad x\in f^{-1}(A^C).
        \end{cases}
    \end{equation}
    
    Thus, together with \eqref{eqn:ejemplo11_01} and \eqref{eqn:ejemplo11_02}, we can conclude that 
    \[
        \alpha_Y\circ f = \mathcal{P}(f)\circ \alpha_X. 
    \]

    \end{enumerate}
\end{ejemplo}

\section{Products}

\begin{definicion}
    Let $\mathcal{C}$ be a category and  $A,B\in\textbf{Ob}(\mathcal{C})$ objects. A \emph{product} of $ A $ and $ B $ consists of: 
    \begin{itemize}
        \item 
            An object $P\in \textbf{Ob}(\mathcal{C})$; 
        \item
            A pair of arrows $\func{p_1}{P}{A}$, $\func{p_2}{P}{B}\in\boldsymbol{\mathcal{A}}(\mathcal{C})$; 
    \end{itemize}
   such that, for each object $C\in\textbf{Ob}(\mathcal{C})$ and for each pair of arrows $\func{f_1}{C}{A}$ and $\func{f_2}{C}{B}$, there is a single arrow $\func{\overline{f}}{C}{P}$ in $\mathcal{C}$ such that the following diagram: 
   
   \newpage
    \begin{figure}[h]
    \centering
        \begin{pspicture}[showgrid=false](0,0)(6,4.5)
            \rput(0,1){\rnode{A}{$A$}}
            \rput(3,1){\rnode{B}{$P$}}
            \rput(6,1){\rnode{C}{$B$}}
            \rput(3,4){\rnode{D}{$C$}}
        \ncline[nodesep=4pt]{<-}{A}{B}\nbput{$p_1$}
        \ncline[nodesep=4pt]{<-}{C}{B}\naput{$p_2$}
        \ncline[nodesep=4pt]{<-}{A}{D}\naput{$f_1$}
        \ncline[nodesep=4pt]{<-}{C}{D}\nbput{$f_2$}
        \ncline[nodesep=4pt,linestyle=dashed]{<-}{B}{D}\nbput{$\overline{f}$}
        \end{pspicture}
    \end{figure}
    
    commutes, that is,
    \[
        p_1\circ \overline{f} = f_1 \quad\text{y}\quad   p_2\circ \overline{f} = f_2. 
    \]
    Also, the arrows $ p_1 $ and $ p_2 $ are called projections.
\end{definicion}

\begin{observacion}
    \begin{itemize}
        \item 
            Products don't always exist. As an example, let's consider Category \textbf{2}: 
            \begin{center}
                \begin{pspicture}[showgrid=false](-1,2)(3.5,3.5)
                    \rput(3,3){\rnode{B}{B}}
                    \rput(0,3){\rnode{A}{A}}
                    \ncline[nodesep=4pt]{->}{A}{B}\naput{$f$}
                \end{pspicture}
            \end{center}
            It is easy to see that $ A $ and $ B $ have no product. However, if the $ A $ and $ B $ objects in a category have a product, then it is unique except for isomorphisms.
        \item
            The triple $ (P, p_1, p_2) $ will represent the product of the objects $ A $ and $ B $.
    \end{itemize}
\end{observacion}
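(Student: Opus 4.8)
The plan is to establish uniqueness up to isomorphism by invoking the universal property of the product twice, once in each direction, and then using its \emph{uniqueness} clause to identify the two resulting composites with identity arrows. Concretely, I would begin by supposing that both $(P,p_1,p_2)$ and $(Q,q_1,q_2)$ are products of $A$ and $B$, so that each carries its own pair of projections $\func{p_1}{P}{A}$, $\func{p_2}{P}{B}$ and $\func{q_1}{Q}{A}$, $\func{q_2}{Q}{B}$, and the goal is to produce mutually inverse arrows between $P$ and $Q$.

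First I would apply the universal property of $P$ to the object $Q$ together with its own projections $q_1,q_2$, now viewed as a competing pair of arrows into $A$ and $B$; this yields a unique arrow $\func{\overline{q}}{Q}{P}$ satisfying $p_1\circ\overline{q}=q_1$ and $p_2\circ\overline{q}=q_2$. Symmetrically, applying the universal property of $Q$ to the object $P$ with its projections $p_1,p_2$ gives a unique arrow $\func{\overline{p}}{P}{Q}$ with $q_1\circ\overline{p}=p_1$ and $q_2\circ\overline{p}=p_2$.

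Next I would examine the composite $\func{\overline{q}\circ\overline{p}}{P}{P}$. Using associativity of composition together with the two families of identities just obtained, I would compute
\[
    p_1\circ(\overline{q}\circ\overline{p}) = (p_1\circ\overline{q})\circ\overline{p} = q_1\circ\overline{p} = p_1,
\]
and in the same way $p_2\circ(\overline{q}\circ\overline{p})=p_2$. Since the identity arrow $1_P$ obviously also satisfies $p_1\circ 1_P=p_1$ and $p_2\circ 1_P=p_2$, the heart of the argument—and its main obstacle—is to invoke the uniqueness part of the universal property of $P$, applied with the cone $C=P$ and the pair $(p_1,p_2)$: there can be exactly one arrow $P\to P$ compatible with both projections, whence $\overline{q}\circ\overline{p}=1_P$. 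The symmetric computation, exchanging the roles of $P$ and $Q$, gives $\overline{p}\circ\overline{q}=1_Q$.

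Finally, these two equalities say precisely that $\overline{p}$ is an isomorphism with inverse $\overline{q}$, so $P\cong Q$, which is the assertion. I expect the delicate point to be the careful bookkeeping of which product's universal property is being used at each stage: the existence of the mediating arrows $\overline{p}$ and $\overline{q}$ is automatic, but the crucial equalities $\overline{q}\circ\overline{p}=1_P$ and $\overline{p}\circ\overline{q}=1_Q$ rest entirely on the uniqueness clause of the definition rather than on any explicit construction of these arrows.
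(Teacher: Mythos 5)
Your uniqueness argument is correct and complete: it is the standard double application of the universal property, producing $\func{\overline{q}}{Q}{P}$ and $\func{\overline{p}}{P}{Q}$ from the two products, computing $p_1\circ(\overline{q}\circ\overline{p})=p_1$ and $p_2\circ(\overline{q}\circ\overline{p})=p_2$, and then invoking the uniqueness clause for the cone $(P,p_1,p_2)$ over the product $(P,p_1,p_2)$ to conclude $\overline{q}\circ\overline{p}=1_P$, with $\overline{p}\circ\overline{q}=1_Q$ by symmetry. You correctly identify the crux, namely that $1_P$ is itself a mediating arrow for that cone, so uniqueness forces the composite to equal it. The paper offers no proof whatsoever of this observation (it merely asserts both claims), so your write-up supplies exactly the missing argument; there is no competing approach in the paper to compare against.

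There is, however, a gap relative to the full statement: you never address the first bullet, the claim that $A$ and $B$ have no product in Category \textbf{2} --- and it is worth knowing that this claim is actually \emph{false} as the paper states it. In Category \textbf{2} the triple $(A,1_A,f)$ is a product of $A$ and $B$: the only cone with vertex $A$ is the pair $(1_A,f)$, which is mediated uniquely by $1_A$ (the sole arrow from $A$ to $A$), and there are no cones with vertex $B$ because there is no arrow from $B$ to $A$, so the universal property holds vacuously there. More generally, in a preorder a binary product is a meet, and in the two-element chain $A\leq B$ the meet of $A$ and $B$ is $A$. A correct counterexample is the discrete category with two objects and only identity arrows: a product $P$ would require arrows $\func{p_1}{P}{A}$ and $\func{p_2}{P}{B}$, which forces $P=A$ and $P=B$ simultaneously, a contradiction. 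So the only defect in your proposal is the omitted bullet, and had you attempted it as stated you would have been trying to prove a false assertion; the paper's example itself needs repair.
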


As usual, let's first present a classic product example.

\begin{ejemplo}
   Consider the category \textbf{Set} and $A,B\in\textit{\textbf{Ob}}(\textbf{Set})$ objects. Recall that the usual Cartesian product of $ A $ and $ B $ is given by $A\times B = \{(a,b): a\in A \;\text{and}\; b\in B\}$ and the projections functions are defined by 
   \[
        \funcion{p_1}{A\times B}{A}{(a,b)}{a} \quad\text{y}\quad \funcion{p_2}{A\times B}{B}{(a,b)}{b.}
   \]
    The triple $(A\times B,p_1,p_2)$ is a product in \textbf{Set} of the objects $A$ y $B$. In fact, let $C\in\textit{\textbf{Ob}}(\mathcal{C})$ and $\func{f}{C}{A}$, $\func{f}{C}{B}$ arrows in \textbf{Set}, let's define the arrow $\overline{f}$ by
    \[
        \funcion{\overline{f}}{C}{A\times B}{c}{(f(c),g(c)).}
    \]
    
    Thus, it is easy to see that the following diagram
    \newpage
    \begin{figure}[h]
    \centering
        \begin{pspicture}[showgrid=false](0,0)(6,4.5)
            \rput(0,1){\rnode{A}{$A$}}
            \rput(3,1){\rnode{B}{$A\times B$}}
            \rput(6,1){\rnode{C}{$B$}}
            \rput(3,4){\rnode{D}{$C$}}
        \ncline[nodesep=4pt]{<-}{A}{B}\nbput{$p_1$}
        \ncline[nodesep=4pt]{<-}{C}{B}\naput{$p_2$}
        \ncline[nodesep=4pt]{<-}{A}{D}\naput{$f$}
        \ncline[nodesep=4pt]{<-}{C}{D}\nbput{$g$}
        \ncline[nodesep=4pt,linestyle=dashed]{<-}{B}{D}\nbput{$\overline{f}$}
        \end{pspicture}
    \end{figure}
    conmutes.
\end{ejemplo}

Thanks to the preceding example we can consider the following product in the probabilistic context:

\begin{ejemplo}
  Consider the category \textbf{Meas} and $(X,\mathcal{F}_X),(Y,\mathcal{F}_Y)\in\textit{\textbf{Ob}}(\textbf{Meas})$. Also, let $(X\times X,\mathcal{F}_X\otimes\mathcal{F}_Y)$ be the measurable product space, where $\mathcal{F}_X\otimes\mathcal{F}_Y$ is the  smaller $\sigma-$algebra such that the projections functions $p_1$ and $p_2$ are measurable. On the other hand, let $(Z,\mathcal{F}_Z)\in\textit{\textbf{Ob}}(\textbf{Meas})$ and $\func{f}{(Z,\mathcal{F}_Z)}{(X,\mathcal{F}_X)}$, $\func{g}{(Z,\mathcal{F}_Z)}{(X,\mathcal{F}_X)}$ arrows in \textbf{Meas}, let's define the function $\overline{f}$ by
   \[
        \funcion{\overline{f}}{(Z,\mathcal{F}_Z)}{(X\times Y,\mathcal{F}_X\otimes \mathcal{F}_Y)}{c}{(f(c),g(c)).}
   \]
   It is easy to see that the following diagram

    \begin{figure}[h]
        \centering
        \begin{pspicture}[showgrid=false](-2,0)(8,4)
            \rput(-2,0.5){\rnode{A}{$(X,\mathcal{F}_X)$}}
            \rput(3,0.5){\rnode{B}{$(X\times Y,\mathcal{F}_X\otimes\mathcal{F}_Y)$}}
            \rput(8,0.5){\rnode{C}{$(Y,\mathcal{F}_Y)$}}
            \rput(3,3.5){\rnode{D}{$(Z,\mathcal{F}_Z)$}}
        \ncline[nodesep=4pt]{<-}{A}{B}\nbput{$p_1$}
        \ncline[nodesep=4pt]{<-}{C}{B}\naput{$p_2$}
        \ncline[nodesep=4pt]{<-}{A}{D}\naput{$f$}
        \ncline[nodesep=4pt]{<-}{C}{D}\nbput{$g$}
        \ncline[nodesep=4pt,linestyle=dashed]{<-}{B}{D}\nbput{$\overline{f}$}
        \end{pspicture}
    \end{figure}
   
   conmutes.
   
   To prove that  $((X\times Y,\mathcal{F}_X\otimes\mathcal{F}_Y),p_1,p_2)$  is a product in \textbf{Meas},  just check that $ \func{\overline{f}}{(Z,\mathcal{F}_Z)}{(X\times Y,\mathcal{F}_X\otimes \mathcal{F}_Y)}$  is a meausurable $\mathcal{F}_Z/ \mathcal{F}_X\otimes\mathcal{F}_Y$ function. For this, let $F_X\times F_Y\in\mathcal{F}_X\otimes \mathcal{F}_Y$, we are going to show that  $\overline{f}^{-1}(F_X\times F_Y)\in\mathcal{F}_Z$. 
   
  Let $x\in\overline{f}^{-1}(F_X\times F_Y)$, we have the following equivalences
  \begin{align*}
       \overline{f}(x) \in F_X\times F_Y
       & \Longleftrightarrow (f(x),g(x)) \in F_X\times F_Y \\
       & \Longleftrightarrow f(x)\in F_X \;\text{ y }\; g(x)\in F_Y \\
       & \Longleftrightarrow x\in f^{-1} (F_X)\; \text{ y }\; x\in g^{-1}(F_Y),
   \end{align*}
   with which, we can conclude that
   \begin{equation}\label{eqn:ejem13}
        \overline{f}^{-1}(F_X\times F_Y) = f^{-1}(F_X)\cap g^{-1}(F_Y). 
   \end{equation}
   Now, given that $F_X\in\mathcal{F}_X$ and $F_Y\in\mathcal{F}_Y$, it follows that
   \[ 
        f^{-1}(F_X)\in \mathcal{F}_Z\quad\text{y}\quad g^{-1}(F_Y)\in\mathcal{F}_Z, 
   \]
   from where, together with \eqref{eqn:ejem13}, we can deduce that 
   \[
        \overline{f}^{-1}(F_X\times F_Y)\in\mathcal{F}_Z. 
   \]
   
   Therefore, $(X\times Y,\mathcal{F}_X\otimes\mathcal{F}_Y,p_1,p_2)$ is a product in \textbf{Meas}.
\end{ejemplo}

\section{Monads}

The most central concept in categorical probability is the probability monad, the first probability monad is the Giry monad defined in ~\cite{GiryM} and the first ideas about its structure are found in Lawvere~\cite{LawvereF}.

Before defining the concept of a monad, let's fix a certain notation. Let  $\mathcal{C},\mathcal{D},\mathcal{E}$ categories and  $\func{F,G}{\mathcal{C}}{\mathcal{D}}, \func{H}{\mathcal{D}}{\mathcal{E}}$ functors. If $\func{\eta}{F}{G}$ is a natural transformation, then the natural transformation $\func{H\eta}{H\circ F}{H\circ G }$ can be defined by
\[
    (H\eta)_A = H(\eta_A),  
\]
for each $A\in\textit{\textbf{Ob}}(\mathcal{A})$. 

On the other hand, if $\func{K}{\mathcal{E}}{\mathcal{D}}$ is a functor we can define the natural transformation $\func{\eta K}{F\circ K}{G\circ K}$ by  
\[
    (\eta K)_A = \eta_{K(A)}, 
\]

for each $A\in\textit{\textbf{Ob}}(\mathcal{E})$. 

\begin{definicion}[Monad]
    Let $\mathcal{C}$ a category. A \emph{monad} in $ \mathcal{C} $ consists of:
    \begin{itemize}
        \item 
            A functor $\func{T}{\mathcal{C}}{\mathcal{C}}$;
        \item
            A natural transformation $\func{\eta}{1_{\mathcal{C}}}{T}$, called unity; 
        \item
             A natural transformation $\func{\mu}{T\circ T}{T}$, called composition or multiplication; 
    \end{itemize}
    such that the following diagrams conmutes: 
    
    \begin{figure}[h]
    \centering
        \begin{pspicture}[showgrid=false](0,0)(13,4.5)
            \rput(0,4){\rnode{A}{$T$}}
            \rput(3,4){\rnode{B}{$T\circ T$}}
            \rput(3,1){\rnode{C}{$T$}}
            \rput(5,4){\rnode{D}{$T$}}
            \rput(8,4){\rnode{E}{$T\circ T$}}
            \rput(8,1){\rnode{F}{$T$}}
            \rput(10,4){\rnode{G}{$T\circ T\circ T$}}
            \rput(13,4){\rnode{H}{$T\circ T$}}
            \rput(13,1){\rnode{I}{$T$}}
            \rput(10,1){\rnode{J}{$T\circ T$}}
            \ncline[nodesep=4pt]{->}{A}{B}\naput{$\eta T$}
            \ncline[nodesep=4pt]{->}{B}{C}\naput{$\mu$}
            \ncline[nodesep=4pt]{->}{A}{C}\nbput{$\textbf{1}_{T}$}
            \ncline[nodesep=4pt]{->}{D}{E}\naput{$T\eta$}
            \ncline[nodesep=4pt]{->}{E}{F}\naput{$\mu$}
            \ncline[nodesep=4pt]{->}{D}{F}\nbput{$\textbf{1}_{T}$}
            \ncline[nodesep=4pt]{->}{G}{H}\naput{$T\mu$}
            \ncline[nodesep=4pt]{->}{H}{I}\naput{$\mu$}
            \ncline[nodesep=4pt]{->}{G}{J}\naput{$\mu T$}
            \ncline[nodesep=4pt]{->}{J}{I}\nbput{$\mu$}
        \end{pspicture}
    \label{fig:monada}
\end{figure}

These diagrams are called the left unit, right unit, and associativity, respectively.
\end{definicion}

\begin{observacion}
    We can rewrite the diagram in terms of the components of the natural transformations. To do this, let $A\in\textit{\textbf{Ob}}(\mathcal{C})$, we have the unit $\func{\eta_A}{A}{T(A)}$ and the composition $\func{\mu_A}{T(T(A))}{T(A)}$ are arrows in $\mathcal{C}$ such that the following diagrams conmutes: 
        \begin{figure}[h]
    \centering
        \begin{pspicture}[showgrid=false](0,0.5)(13,4.5)
            \rput(0,4){\rnode{A}{$T(A)$}}
            \rput(3,4){\rnode{B}{$T\circ T(A)$}}
            \rput(3,1){\rnode{C}{$T(A)$}}
            \rput(5,4){\rnode{D}{$T(A)$}}
            \rput(8,4){\rnode{E}{$T\circ T(A)$}}
            \rput(8,1){\rnode{F}{$T(A)$}}
            \rput(11,4){\rnode{G}{$T\circ T\circ T(A)$}}
            \rput(14,4){\rnode{H}{$T\circ T(A)$}}
            \rput(14,1){\rnode{I}{$T(A)$}}
            \rput(11,1){\rnode{J}{$T\circ T(A)$}}
            \ncline[nodesep=4pt]{->}{A}{B}\naput{$\eta_{T(A)}$}
            \ncline[nodesep=4pt]{->}{B}{C}\naput{$\mu_A$}
            \ncline[nodesep=4pt]{->}{A}{C}\nbput{$\textbf{1}_{T(A)}$}
            \ncline[nodesep=4pt]{->}{D}{E}\naput{$T(\eta_A)$}
            \ncline[nodesep=4pt]{->}{E}{F}\naput{$\mu_A$}
            \ncline[nodesep=4pt]{->}{D}{F}\nbput{$\textbf{1}_{T(A)}$}
            \ncline[nodesep=4pt]{->}{G}{H}\naput{$T (\mu_A) $}
            \ncline[nodesep=4pt]{->}{H}{I}\naput{$\mu_A$}
            \ncline[nodesep=4pt]{->}{G}{J}\naput{$\mu_{T(A)}$}
            \ncline[nodesep=4pt]{->}{J}{I}\nbput{$\mu_A$}
        \end{pspicture}
    \label{fig:monadacomp}
\end{figure}
\end{observacion}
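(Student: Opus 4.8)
The plan is to use the elementary fact that two natural transformations with the same source and target functors are equal if and only if they agree componentwise, together with the componentwise description of whiskering recalled just before the monad definition. The commuting triangles and square in the monad definition are equalities between natural transformations in the functor category $[\mathcal{C},\mathcal{C}]$; evaluating each such equality at an arbitrary object $A\in\textbf{Ob}(\mathcal{C})$ will produce exactly the componentwise diagrams asserted in the observation.

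First I would record the value at $A$ of each whiskered transformation appearing in the diagrams. Using $(\eta K)_A=\eta_{K(A)}$ with $K=T$, the transformation $\eta T\colon T\to T\circ T$ has component $(\eta T)_A=\eta_{T(A)}$; using $(H\eta)_A=H(\eta_A)$ with $H=T$, the transformation $T\eta\colon T\to T\circ T$ has component $(T\eta)_A=T(\eta_A)$. In the same way $(T\mu)_A=T(\mu_A)$ and $(\mu T)_A=\mu_{T(A)}$, while the identity transformation satisfies $(1_T)_A=1_{T(A)}$ by the formula for the identity natural transformation.

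Next I would evaluate the three monad diagrams at $A$. Since composition of natural transformations is computed componentwise, that is $(\beta\circ\alpha)_A=\beta_A\circ\alpha_A$, the left unit equality $\mu\circ\eta T=1_T$ becomes $\mu_A\circ\eta_{T(A)}=1_{T(A)}$, the right unit equality $\mu\circ T\eta=1_T$ becomes $\mu_A\circ T(\eta_A)=1_{T(A)}$, and the associativity equality $\mu\circ T\mu=\mu\circ\mu T$ becomes $\mu_A\circ T(\mu_A)=\mu_A\circ\mu_{T(A)}$. These are precisely the componentwise diagrams displayed in the observation, so the reformulation is justified.

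The argument involves no genuine obstacle; the only point requiring care is the bookkeeping of the whiskering conventions, namely distinguishing left whiskering $T\eta,T\mu$, which wrap $T(-)$ around a component and hence yield $T(\eta_A)$ and $T(\mu_A)$, from right whiskering $\eta T,\mu T$, which reindex a component at $T(A)$ and hence yield $\eta_{T(A)}$ and $\mu_{T(A)}$. Keeping these straight guarantees that each arrow lands with the correct domain and codomain, after which the passage from the functor-level identities to the object-level identities is immediate.
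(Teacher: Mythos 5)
Your proposal is correct and coincides with what the paper intends: the paper states this observation without an explicit proof, treating it as immediate from the whiskering formulas $(H\eta)_A = H(\eta_A)$ and $(\eta K)_A = \eta_{K(A)}$ recalled just before the monad definition, and your componentwise evaluation of the equalities $\mu\circ(\eta T) = 1_T$, $\mu\circ(T\eta) = 1_T$ and $\mu\circ(T\mu) = \mu\circ(\mu T)$ at an arbitrary object $A$, using $(\beta\circ\alpha)_A=\beta_A\circ\alpha_A$ and $(1_T)_A=1_{T(A)}$, is exactly that intended unwinding. You also correctly separate left whiskering ($(T\eta)_A=T(\eta_A)$, $(T\mu)_A=T(\mu_A)$) from right whiskering ($(\eta T)_A=\eta_{T(A)}$, $(\mu T)_A=\mu_{T(A)}$), which is the only point where the bookkeeping could go wrong.
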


 We will present an example of a monad in the category \textbf{Meas}.
 
 \begin{definicion}[Monad of Giry]\label{ejem:MonGiry}
      The monad of Giry is given by: 
  \begin{itemize}
      \item 
        The functor of Giry $\func{\mathcal{P}}{\textbf{Meas}}{\textbf{Meas}}$ defined in~\ref{defi:FGiry}; 
      \item
        \textbf{Unity}: The natural transformation $\func{\alpha}{1_{\textbf{Meas}}}{\mathcal{P}}$ presented in example~\ref{ejem:deltaDirac}; 
    \item
        \textbf{Composition:} 
        The natural transformation $\func{E}{\mathcal{P}\circ\mathcal{P}}{\mathcal{P}}$ defined by:

        Let $(X,\mathcal{F}_X)$a measurable space, for $\pi'\in \textbf{P}(\textbf{P}(X))$, define a measure on $(X,\mathcal{F}_X)$ by
\[
    E_{(X,\mathcal{F}_X)} (\pi') (F_X) = \int_{\mathbf{P}(X)} i_{F_X}(\mathbb{P}_X) \, \pi'(d \mathbb{P}_X) = \int_{\mathbf{P}(X)} \mathbb{P}_X(F_X) \, \pi'(d \mathbb{P}_X), 
\]
for each $F\in\mathcal{F}_X$. 
  \end{itemize}
 \end{definicion}

\begin{observacion}
    Note that $\mathbb{P}_X$ from the definition~\ref{ejem:MonGiry} corresponds to a probability measure defined on  $(X,\mathcal{F}_X)$. Furthermore, the integral is well defined because the evaluation functions are $\textbf{P}(\mathcal{F}_X)/\mathcal{B}([0,1])-$measurable. 
\end{observacion}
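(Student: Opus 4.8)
The plan is to read off from the definitions that the integral defining $E_{(X,\mathcal{F}_X)}(\pi')$ is simply the Lebesgue integral of a bounded measurable function against a probability measure, which makes it automatically well defined. First I would recall that, by the definition of the Giry functor~\ref{defi:FGiry}, every point of $\textbf{P}(X)$ is by construction a probability measure on $(X,\mathcal{F}_X)$. Hence in the expression $\int_{\textbf{P}(X)}\mathbb{P}_X(F_X)\,\pi'(d\mathbb{P}_X)$ the integration variable $\mathbb{P}_X$ ranges over $\textbf{P}(X)$, so each such $\mathbb{P}_X$ is indeed a probability measure on $(X,\mathcal{F}_X)$ and $\mathbb{P}_X(F_X)$ is a well-defined number in $[0,1]$ for every $F_X\in\mathcal{F}_X$; this already settles the first assertion of the observation.

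Next I would verify the two ingredients that guarantee the integral exists. For the integrand, fix $F_X\in\mathcal{F}_X$ and note that the map $\mathbb{P}_X\mapsto\mathbb{P}_X(F_X)$ is exactly the evaluation function $\func{i_{F_X}}{\textbf{P}(X)}{[0,1]}$ introduced in~\ref{defi:FGiry}. Since $\textbf{P}(\mathcal{F}_X)$ was defined there as the smallest $\sigma$-algebra making every $i_{F_X}$ measurable, the integrand is $\textbf{P}(\mathcal{F}_X)/\mathcal{B}([0,1])$-measurable. For the measure, $\pi'\in\textbf{P}(\textbf{P}(X))$ is by the same definition a probability measure on $(\textbf{P}(X),\textbf{P}(\mathcal{F}_X))$. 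We are therefore integrating a measurable function with values in $[0,1]$ against a probability measure, so the integral is well defined and, by monotonicity, its value again lies in $[0,1]$, as required for $E_{(X,\mathcal{F}_X)}(\pi')(F_X)$ to be a sensible quantity.

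I do not expect a genuine obstacle here; the entire content is bookkeeping with the two $\sigma$-algebras. The one point deserving emphasis is that measurability of the integrand is not something to be checked by hand but is immediate from the very construction of $\textbf{P}(\mathcal{F}_X)$ as the smallest $\sigma$-algebra rendering all evaluation maps measurable. Everything else—boundedness of the integrand and the fact that $\pi'$ is a finite (indeed probability) measure—is built directly into the definitions, which is precisely why the observation can assert well-definedness in a single line.
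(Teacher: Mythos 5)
Your proposal is correct and follows essentially the same route as the paper: the integrand $\mathbb{P}_X\mapsto\mathbb{P}_X(F_X)$ is the evaluation map $i_{F_X}$, which is $\textbf{P}(\mathcal{F}_X)/\mathcal{B}([0,1])$-measurable by the very construction of $\textbf{P}(\mathcal{F}_X)$ in Definition~\ref{defi:FGiry}, and $\pi'$ is a probability measure on $(\textbf{P}(X),\textbf{P}(\mathcal{F}_X))$, so the integral of this bounded measurable function is well defined. Your added remark that the value lies in $[0,1]$ by monotonicity is a harmless (and correct) elaboration beyond what the paper states.
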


\begin{observacion}
    Let $(X,\mathcal{F}_X)$ and $(Y,\mathcal{F}_Y)$ be measurable spaces.
Let's prove that  $\func{E}{\mathcal{P}\circ\mathcal{P}}{\mathcal{P}}$ is a natural transformation, for which, we must show that  $\func{E_{(X,\mathcal{F}_X)}}{(\mathcal{P}\circ\mathcal{P}) (X,\mathcal{F}_X)}{\mathcal{P}(X,\mathcal{F}_X)}$ is a measurable function and that, for any arrow $\func{f}{(X,\mathcal{F}_X)}{(Y,\mathcal{F}_Y)}$ in \textbf{Meas}, the following diagram:

\begin{figure}[h]
    \centering
    \psset{unit=1.4cm}
            \begin{pspicture}[showgrid=false](0,0)(3,4.5)
                \rput(0,4){\rnode{A}{$\mathcal{P}(\mathcal{P}( (X,\mathcal{F_X})) )$}}
                \rput(3,4){\rnode{B}{$\mathcal{P}(\mathcal{P}( (Y,\mathcal{F_Y})) )$}}
                \rput(0,1){\rnode{C}{$\mathcal{P}((X,\mathcal{F}_X))$}}
                \rput(3,1){\rnode{D}{$\mathcal{P}((F,\mathcal{F}_Y))$}}
            \ncline[nodesep=4pt]{->}{A}{B}\naput{$\mathcal{P}(\mathcal{P}(f))$}
            \ncline[nodesep=4pt]{->}{C}{D}\nbput{$\mathcal{P}(f)$}
            \ncline[nodesep=4pt]{->}{A}{C}\nbput{$E_{(X,\mathcal{F}_X)}$}
            \ncline[nodesep=4pt]{->}{B}{D}\naput{$E_{(Y,\mathcal{F}_Y)}$}
            \end{pspicture}
    \end{figure}

commutes, which is equivalent to showing that
\begin{equation}\label{eqn:monadameas}
    E_{(Y,\mathcal{F}_Y)} \circ \mathcal{P}(\mathcal{P}(f)) = \mathcal{P}(f)\circ E_{(X,\mathcal{F}_X)}. 
\end{equation}

The fact that $\func{E_{(X,\mathcal{F}_X)}}{(\textbf{P}(\textbf{P}(X)),\textbf{P}(\textbf{P}(\mathcal{F}_X)))}{(\textbf{P}(X),\textbf{P}(\mathcal{F}_X))}$ is $\textbf{P}(\textbf{P}(X))/\textbf{P}(X)-$ measurable is analogous to the proof of the measurability of the natural transformation $\func{\alpha}{1_{\textbf{Meas}}}{\mathcal{P}}$ presented in example~\ref{ejem:deltaDirac}. Note that $\textbf{P}(\mathcal{F}_X)$ and $\textbf{P}(\textbf{P}(\mathcal{F}_X))$ correspond to the generated  $\sigma-$álgebras by the functions defined in en~\ref{defi:FGiry}.

Let's prove that the naturality condition \eqref{eqn:monadameas} is satisfied. Indeed, let $\pi'\in\textbf{P}(\textbf{P}(X))$ and $F_Y\in\mathcal{F}_Y$, on the one hand, thanks to the change of variable theorem, we have to
\begin{align}\label{eqn:transE_01}
     \left(E_{(Y,F_Y)} \circ \mathcal{P}(\mathcal{P}(f))(\pi')\right)(F_Y)
     & = \int_{\textbf{P}(Y)}  i_{F_Y} \; d((\mathcal{P}(\mathcal{P})(f))(\pi') ) \notag \\
     & = \int_{\textbf{P}(Y)}  i_{F_Y} \; d( \pi' \circ (\mathcal{P}(f))^{-1} ) \notag\\
     & = \int_{\mathcal{P}(f)^{-1}(\textbf{P}(Y))}  i_{F_Y}\circ \mathcal{P}(f) \; d\pi' \notag\\
     & = \int_{\textbf{P}(X)} i_{F_Y}\circ \mathcal{P}(f) \; d\pi',\notag \\
     & = \int_{\textbf{P}(X)} i_{F_Y}\circ \mathcal{P}(f)(\mathbb{P}_X) \, \pi'(d\mathbb{P}_X), \notag\\
     & = \int_{\textbf{P}(X)} i_{F_Y}\left(\mathbb{P}_X\circ f^{-1}\right) \; \pi'(d\mathbb{P}_X),\notag \\
     & = \int_{\textbf{P}(X)} \mathbb{P}_X\left(f^{-1}(F_Y) \right)\; \pi'(d\mathbb{P}_X).
\end{align}

On the other hand, we have to
\begin{align}\label{eqn:transE_02}
    (\mathcal{P}(f)\circ E_{(X,\mathcal{F}_X)}(\pi')) (F_Y)
    & = E_{(X,\mathcal{F}_X)}  (\pi') (f^{-1}(F_Y)) \notag\\
    & = \int_{\textbf{P}(X)} i_{f^{-1} (F_Y)} \, d\pi' \notag\\
    & = \int_{\textbf{P}(X)} i_{f^{-1} (F_Y)}(\mathbb{P}_X) \, \pi'(d\mathbb{P}_X) \notag\\
    & = \int_{\textbf{P}(X)} \mathbb{P}_X \left( f^{-1}(F_Y)\right) \, \pi'(d\mathbb{P}_X).
\end{align}

Together with  \eqref{eqn:transE_01} and \eqref{eqn:transE_02}, we can conclude that
\[
     E_{Y,\mathcal{F}_Y} \circ \mathcal{P}(\mathcal{P}(f)) = \mathcal{P}(f)\circ E_{(X,\mathcal{F}_X)}.  
\]

\end{observacion}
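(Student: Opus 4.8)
The plan is to establish that $\func{E}{\mathcal{P}\circ\mathcal{P}}{\mathcal{P}}$ is a natural transformation by verifying its two defining requirements: that each component $\func{E_{(X,\mathcal{F}_X)}}{(\textbf{P}(\textbf{P}(X)),\textbf{P}(\textbf{P}(\mathcal{F}_X)))}{(\textbf{P}(X),\textbf{P}(\mathcal{F}_X))}$ is a genuine arrow of \textbf{Meas}, i.e.\ a measurable map, and that the naturality square \eqref{eqn:monadameas} commutes for every arrow $\func{f}{(X,\mathcal{F}_X)}{(Y,\mathcal{F}_Y)}$.

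For measurability I would exploit the fact that, by Definition~\ref{defi:FGiry}, $\textbf{P}(\mathcal{F}_X)$ is the smallest $\sigma$-algebra making all the evaluation maps $i_{F_X}$ measurable. Consequently it suffices to check that $i_{F_X}\circ E_{(X,\mathcal{F}_X)}$ is $\textbf{P}(\textbf{P}(\mathcal{F}_X))$-measurable for each $F_X\in\mathcal{F}_X$, equivalently that $E_{(X,\mathcal{F}_X)}^{-1}(i_{F_X}^{-1}(B))\in\textbf{P}(\textbf{P}(\mathcal{F}_X))$ for all $B\in\mathcal{B}(\R)$. From the definition of $E$ one reads off $i_{F_X}\circ E_{(X,\mathcal{F}_X)}(\pi')=\int_{\textbf{P}(X)} i_{F_X}\, d\pi'$, so the crux is that the map $\pi'\mapsto\int_{\textbf{P}(X)} i_{F_X}\, d\pi'$ is measurable. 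I would prove this by simple-function approximation: for $A\in\textbf{P}(\mathcal{F}_X)$ the map $\pi'\mapsto\pi'(A)=\int_{\textbf{P}(X)}\mathbbm{1}_A\, d\pi'$ is measurable by the very construction of $\textbf{P}(\textbf{P}(\mathcal{F}_X))$; linearity extends this to simple functions, and the monotone convergence theorem extends it to the bounded measurable $i_{F_X}$, which is an increasing limit of simple functions. This mirrors the argument for $\alpha$ in Example~\ref{ejem:deltaDirac}, so I would invoke that analogy rather than repeat the details.

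For the naturality square the plan is to reduce the equality of the two measure-valued composites to a pointwise identity: since two probability measures on $(Y,\mathcal{F}_Y)$ coincide once they agree on every $F_Y\in\mathcal{F}_Y$, I would fix $\pi'\in\textbf{P}(\textbf{P}(X))$ and $F_Y\in\mathcal{F}_Y$ and evaluate both composites at $F_Y$. For $E_{(Y,\mathcal{F}_Y)}\circ\mathcal{P}(\mathcal{P}(f))$ I would write $\mathcal{P}(\mathcal{P}(f))(\pi')=\pi'\circ(\mathcal{P}(f))^{-1}$ and apply the change-of-variables formula to rewrite $\int_{\textbf{P}(Y)} i_{F_Y}\, d(\pi'\circ(\mathcal{P}(f))^{-1})$ as $\int_{\textbf{P}(X)} i_{F_Y}\circ\mathcal{P}(f)\, d\pi'$, then use $i_{F_Y}\circ\mathcal{P}(f)(\mathbb{P}_X)=(\mathbb{P}_X\circ f^{-1})(F_Y)=\mathbb{P}_X(f^{-1}(F_Y))$. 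For $\mathcal{P}(f)\circ E_{(X,\mathcal{F}_X)}$ I would unfold its value at $F_Y$ as $E_{(X,\mathcal{F}_X)}(\pi')(f^{-1}(F_Y))=\int_{\textbf{P}(X)} i_{f^{-1}(F_Y)}\, d\pi'=\int_{\textbf{P}(X)}\mathbb{P}_X(f^{-1}(F_Y))\, d\pi'$. Both composites therefore collapse to the common integral $\int_{\textbf{P}(X)}\mathbb{P}_X(f^{-1}(F_Y))\,\pi'(d\mathbb{P}_X)$, which establishes \eqref{eqn:monadameas}.

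I expect the main obstacle to be the change-of-variables step: one must correctly recognize $\mathcal{P}(\mathcal{P}(f))(\pi')$ as the pushforward $\pi'\circ(\mathcal{P}(f))^{-1}$ and then apply the pushforward integration formula to the bounded measurable integrand $i_{F_Y}$. The measurability of $E_{(X,\mathcal{F}_X)}$ is comparatively routine, but it does hinge on the monotone convergence theorem to pass from simple functions to $i_{F_X}$, so I would be careful to state that step explicitly rather than leave it implicit.
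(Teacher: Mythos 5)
Your verification of the naturality square is essentially the paper's own computation: both fix $\pi'\in\textbf{P}(\textbf{P}(X))$ and $F_Y\in\mathcal{F}_Y$, identify $\mathcal{P}(\mathcal{P}(f))(\pi')$ with the pushforward $\pi'\circ(\mathcal{P}(f))^{-1}$, apply the change-of-variables formula to the integrand $i_{F_Y}$, and collapse both composites to the common value $\int_{\textbf{P}(X)}\mathbb{P}_X\left(f^{-1}(F_Y)\right)\,\pi'(d\mathbb{P}_X)$, so on that half there is nothing to separate you from the paper. Where you genuinely diverge is the measurability of the components. The paper disposes of it with the claim that it is ``analogous'' to the measurability of $\alpha$ in Example~\ref{ejem:deltaDirac}; but that earlier proof was a case analysis exploiting that $\delta_x(F_X)\in\{0,1\}$, and it does not literally transfer to $E_{(X,\mathcal{F}_X)}$, whose evaluations $\pi'\mapsto\int_{\textbf{P}(X)} i_{F_X}\,d\pi'$ take arbitrary values in $[0,1]$. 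Your argument --- reduce to the generating sets $i_{F_X}^{-1}(B)$ of $\textbf{P}(\mathcal{F}_X)$, observe that $\pi'\mapsto\pi'(A)$ is measurable for each $A\in\textbf{P}(\mathcal{F}_X)$ by the very construction of $\textbf{P}(\textbf{P}(\mathcal{F}_X))$, extend by linearity to simple functions and by monotone convergence to $i_{F_X}$ --- is the correct general argument, and it is in substance proposition~2 of Lemma~\ref{lema:monadaGirylema} applied with $(X,\mathcal{F}_X)$ replaced by $(\textbf{P}(X),\textbf{P}(\mathcal{F}_X))$ and $\theta=i_{F_X}$. So your route buys a self-contained and actually valid justification of the one step the paper leaves as an imperfect analogy, at the cost of a few extra lines; citing the lemma directly would have been an equally legitimate shortcut.
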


Consider the theorem presented in Giry [theorem 1, \cite{GiryM}], which allows us to show that the unit and associativity diagrams commute. 

\begin{lema}\label{lema:monadaGirylema}
    Let $(X,\mathcal{F}_X)$, $(Y,\mathcal{F}_Y)$ measurables spaces, $\mathbb{P}_X\in\textbf{P}(X)$, $\pi'\in \textbf{P}(\textbf{P}(X))$ and $\func{\theta}{(X,\mathcal{F}_X)}{(\R,\mathcal{B}(\R))}$ an integrable function. The following results are obtained:
    \begin{enumerate}
        \item 
            $\int_X \theta  \, d\alpha_{(X,\mathcal{F}_X)}(x) = \theta(x)$ for all $x\in X$. 
        \item
            The function $\func{\xi_{\theta}}{(\textbf{P}(X),\textbf{P}(\mathcal{F}_X))}{(\R,\mathcal{B}(\R))}$ defined by $\xi_{\theta}(\mathbb{P}_X) = \displaystyle\int_X \theta \, d\mathbb{P}_X$ is $\textbf{P}(\mathcal{F}_X)/\mathcal{B}(\R)-$ measurable.
        \item
            $\displaystyle\int_X \theta(x) \, d (E_{(X,\mathcal{F}_X)} (\pi')) = \displaystyle\int_{\textbf{P}(X)} \xi_{\theta}(\mathbb{P}_X)\, \pi'(d\mathbb{P}_X)$. 
    \end{enumerate}
\end{lema}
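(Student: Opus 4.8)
The plan is to handle the three claims in order and to observe that parts (2) and (3) both run on the same ``standard machine'': verify the statement for indicator functions, extend by linearity to simple functions, pass to arbitrary nonnegative measurable functions via the monotone convergence theorem, and finally split a general integrable $\theta$ into positive and negative parts. Claim (1) requires none of this: since $\alpha_{(X,\mathcal{F}_X)}(x) = \delta_x$ is the Dirac measure at $x$, the identity $\int_X \theta \, d\delta_x = \theta(x)$ is just the sifting property of the Dirac measure, immediate on indicators because $\int_X \mathbbm{1}_{F_X}\, d\delta_x = \delta_x(F_X) = \mathbbm{1}_{F_X}(x)$, and then inherited by every integrable $\theta$ through the same machine.

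For (2), the key step is the base case: when $\theta = \mathbbm{1}_{F_X}$ we have $\xi_\theta(\mathbb{P}_X) = \mathbb{P}_X(F_X) = i_{F_X}(\mathbb{P}_X)$, which is $\mathbf{P}(\mathcal{F}_X)/\mathcal{B}(\R)$-measurable precisely because $\mathbf{P}(\mathcal{F}_X)$ was defined in~\ref{defi:FGiry} as the smallest $\sigma$-algebra making every evaluation map $i_{F_X}$ measurable. Linearity of the integral then gives measurability of $\xi_\theta$ for simple $\theta$; writing a nonnegative measurable $\theta$ as an increasing pointwise limit of simple functions $\theta_n$, monotone convergence yields $\xi_\theta(\mathbb{P}_X) = \lim_n \xi_{\theta_n}(\mathbb{P}_X)$ pointwise, so $\xi_\theta$ is a pointwise limit of measurable functions and is therefore measurable. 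The decomposition $\theta = \theta^+ - \theta^-$ finishes the integrable case.

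For (3), I would first invoke (2) to know the right-hand integrand $\xi_\theta$ is measurable, so that the right-hand side is well posed. The indicator case $\theta = \mathbbm{1}_{F_X}$ is exactly the definition of $E_{(X,\mathcal{F}_X)}(\pi')$ from~\ref{ejem:MonGiry}: the left-hand side equals $E_{(X,\mathcal{F}_X)}(\pi')(F_X) = \int_{\mathbf{P}(X)} \mathbb{P}_X(F_X)\, \pi'(d\mathbb{P}_X)$, while $\xi_{\mathbbm{1}_{F_X}}(\mathbb{P}_X) = \mathbb{P}_X(F_X)$ makes the right-hand side identical. Linearity extends the identity to simple functions, monotone convergence applied on both sides at once extends it to nonnegative measurable $\theta$, and the positive/negative split settles the general integrable case.

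The hard part is bookkeeping rather than any single deep idea. The delicate point is the monotone convergence step in (3), where one must justify interchanging the limit with both integrals simultaneously, which is exactly why the measurability from (2) is needed beforehand so that each $\xi_{\theta_n}$ is a legitimate integrand against $\pi'$. One also has to check that integrability of $\theta$ against $E_{(X,\mathcal{F}_X)}(\pi')$ transfers to integrability of $\mathbb{P}_X \mapsto \int_X \theta\, d\mathbb{P}_X$ for $\pi'$-almost every $\mathbb{P}_X$, so that the positive/negative decomposition is legitimate on both sides of the identity.
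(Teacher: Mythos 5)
Your proposal is correct and follows essentially the same route as the paper's proof: verify the statements on indicator functions (where $\xi_{\mathbbm{1}_{F_X}} = i_{F_X}$ is measurable by the very definition of $\textbf{P}(\mathcal{F}_X)$, and part (3) reduces to the definition of $E_{(X,\mathcal{F}_X)}$), extend by linearity to simple functions, and pass to the limit by monotone convergence, with part (1) handled directly by the sifting property of $\delta_x$. If anything, you are slightly more careful than the paper, which leaves implicit both the final decomposition $\theta = \theta^+ - \theta^-$ for general integrable $\theta$ and the observation that the measurability in (2) is what makes the right-hand side of (3) well posed.
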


\begin{proof}
    \begin{enumerate}
        \item 
            Let $x\in X$, we have that
            \begin{equation*}
                \int_X \theta  \, d\alpha_{(X,\mathcal{F}_X)}(x) = \int_X \theta  \, d\delta_x,
            \end{equation*}
            whence, since $ \delta_x $ is the Dirac measure at the point $ x \in X $, it follows that
            \[
                 \int_X \theta  \, d\alpha_{(X,\mathcal{F}_X)}(x) = \theta(x).
            \]
        \item
            First, we will prove propositions 2 and 3 for indicator functions, let $F_X\in\mathcal{F}_X$. 
            \begin{itemize}
                \item 
                   Let's consider $\theta = \mymathbb{1}_{F_Z}$, on the one hand, we have to 
                    \begin{align}\label{eqn:medxi}
                        \xi_{\theta}(\mathbb{P}_X)
                        & = \int_X \theta \, d\mathbb{P}_X, \notag\\
                        & = \int_X \mymathbb{1}_{F_X} \, d\mathbb{P}_X,\notag \\
                        & = \int_{F_X} \, d\mathbb{P}_X, \notag\\
                        & = \mathbb{P}_X(F_X). 
                    \end{align}

                    Let $Y\in\tau$, we are going to show that $\xi_{\theta}$ function $\textbf{P}(\mathcal{F}_X)/\mathcal{B}(\R)-$measurable, which is equivalent to showing that $\xi_{\theta}^{-1}(Y) \in \sigma(\{i_{F_x}^{-1}(B): F_X\in\mathcal{F}_X \ \text{ y }\, B\in\mathcal{B}(\R) \})$. Thanks to \eqref{eqn:medxi}, let's note that 
                    \begin{align*}
                        \xi_{\theta}^{-1}(Y)
                        & = \{\mathbb{P}_X\in\textbf{P}(X): \xi_{\theta}(\mathbb{P}_X)\in Y \}, \\
                        & = \{\mathbb{P}_X\in\textbf{P}(X): \mathbb{P}_X(F_X)\in Y \}, \\
                        & = i_{F_X}^{-1}(Y), 
                    \end{align*}
                    with which given that $F_X\in\mathcal{F}_X$ and $Y\in\mathcal{B}(\R)$, it follows that 
                    \[
                        \xi_{\theta}^{-1}(Y) \in \sigma(\{i_{F_x}^{-1}(B): F_X\in\mathcal{F}_X \ \text{ and }\, B\in\mathcal{B}(\R) \}),
                    \]
                    that is, we have proven that $\xi_{\theta}$ es a measurable  $\textbf{P}(\mathcal{F}_X)/\mathcal{B}(\R)$ function.
                    
                \item
                   Similarly, let $\theta=\mymathbb{1}_{F_X}$, thanks to the preceding result, we have that $\xi_{\theta}(\mathbb{P}_X) = \mathbb{P}_X(F_X)$ for all $\mathbb{P}_X\in\textbf{P}(X)$, with which, it follows that  
                    \begin{equation}\label{eqn:lema2.2_01}
                        \int_{\textbf{P}(X)} \xi_{\theta}(\mathbb{P}_X) \, \pi'(d\mathbb{P}_X) = \int_{\textbf{P}(X)} \mathbb{P}_X(F_X) \, \pi'(d\mathbb{P}_X).
                    \end{equation}
                    
                   Now, we have to
                    \begin{align*}
                        \int_X \theta \, d E_{(X,\mathcal{F}_X)} (\pi')
                        & = \int_X \mymathbb{1}_{F_X} \, d E_{(X,\mathcal{F}_X)} (\pi'), \\
                        & = \int_{F_X}  \, d E_{(X,\mathcal{F}_X)} (\pi'), \\
                        & = E_{(X,\mathcal{F}_X)} (\pi') (F_X), \\
                        & = \int_{\textbf{P}(X)} \mathbb{P}_X(F_X) \ \pi'(d\mathbb{P}_X),
                    \end{align*}
                    from where, together with \eqref{eqn:lema2.2_01}, we can conclude that 
                    \[
                        \displaystyle\int_X \theta \, d E_{(X,\mathcal{F}_X)} (\pi') = \displaystyle\int_{\textbf{P}(X)} \xi_{\theta}\, \pi'(\mathbb{P}_X).
                    \]
            \end{itemize}
            
           By the linearity of the integral, the results of \textit{2.} and \textit{3.} hold when $ \theta $ is a simple function. The general case is followed by the monotone convergence theorem and the fact that $ \theta $ can be seen as the limit of an increasing sequence of simple functions. 
    \end{enumerate}
\end{proof}

\begin{observacion}

The preceding lemma allows us to prove that the following diagrams:

 \begin{figure}[h]
    \centering
        \begin{pspicture}[showgrid=false](0,0)(13,4.5)
            \rput(0,4){\rnode{A}{$\mathcal{P}$}}
            \rput(3,4){\rnode{B}{$\mathcal{P}\circ\mathcal{P}$}}
            \rput(3,1){\rnode{C}{$\mathcal{P}$}}
            \rput(5,4){\rnode{D}{$\mathcal{P}$}}
            \rput(8,4){\rnode{E}{$\mathcal{P}\circ \mathcal{P}$}}
            \rput(8,1){\rnode{F}{$\mathcal{P}$}}
            \rput(10,4){\rnode{G}{$\mathcal{P}\circ \mathcal{P}\circ \mathcal{P}$}}
            \rput(13,4){\rnode{H}{$\mathcal{P}\circ\mathcal{P}$}}
            \rput(13,1){\rnode{I}{$\mathcal{P}$}}
            \rput(10,1){\rnode{J}{$\mathcal{P}\circ \mathcal{P}$}}
            \ncline[nodesep=4pt]{->}{A}{B}\naput{$\alpha\circ\mathcal{P}$}
            \ncline[nodesep=4pt]{->}{B}{C}\naput{$E$}
            \ncline[nodesep=4pt]{->}{A}{C}\nbput{$\textbf{1}_{\mathcal{P}}$}
            \ncline[nodesep=4pt]{->}{D}{E}\naput{$\mathcal{P}\circ\alpha$}
            \ncline[nodesep=4pt]{->}{E}{F}\naput{$E$}
            \ncline[nodesep=4pt]{->}{D}{F}\nbput{$\textbf{1}_{\mathcal{P}}$}
            \ncline[nodesep=4pt]{->}{G}{H}\naput{$\mathcal{P}\circ E$}
            \ncline[nodesep=4pt]{->}{H}{I}\naput{$E$}
            \ncline[nodesep=4pt]{->}{G}{J}\naput{$E\circ\mathcal{P}$}
            \ncline[nodesep=4pt]{->}{J}{I}\nbput{$E$}
        \end{pspicture}
\end{figure}

commute. The natural transformations  $\eta\circ \mathcal{P},\mathcal{P}\circ\eta$ y $E\circ\mathcal{P}$, are defined by  
\[
    (\alpha\circ\mathcal{P})_{(X,\mathcal{F}_X)} = \alpha_{\mathcal{P}(X,\mathcal{F}_X)}, (\mathcal{P}\circ\alpha)_{(X,\mathcal{F}_X)} = \mathcal{P}(\alpha_{(X,\mathcal{F}_X)} ) \; \text{ y }\; (E\circ \mathcal{P})_{(X,\mathcal{F}_X)} = E_{\mathcal{P}(X,\mathcal{F}_X)}, 
\]
for each $(X,\mathcal{F}_X)\in\textit{\textbf{Ob}}(\textbf{Meas})$, respectively.

Let's prove that the left unit, right unit, and associativity diagrams switch. Let $ (X, \mathcal{F}_X) $ be a measurable space.

\begin{enumerate}
    \item 
      Thanks to proposition 1 of the lemma~\ref{lema:monadaGirylema}, it is easy to see that
        \[
            E_{(X,\mathcal{F}_X)}\circ \delta_{\mathcal{P}(X,\mathcal{F}_X)} = \mymathbb{1}_{(X,\mathcal{F}_X)} \quad\text{y}\quad E_{(X,\mathcal{F}_X)}\circ \mathcal{P}(\delta_{(X,\mathcal{F}_X)}) = \mymathbb{1}_{(X,\mathcal{F}_X)}. 
        \]
    \item
        Now we are going to prove that the associativity diagram commutes, for this, let  $\pi''\in \textbf{P}(\textbf{P}(\textbf{P}(X)))$ and $F_X\in\mathcal{F}_X$, we must show that 
        \[
            E_{(X,\mathcal{F}_X)} \circ E_{\mathcal{P}(X,\mathcal{F}_X)}(\pi'')(F_X) = E_{(X,\mathcal{F}_X)}\circ\alpha_{(\mathcal{P}{(X,\mathcal{F}_X))}}(\pi'')(F_X).
        \]

      On the one hand, together with the variable change theorem, we obtain that 
         \begin{align}\label{eqn:asociatividadGiry_01}
            (E_{(X,\mathcal{F}_X)} \circ \mathcal{P}(E_{\mathcal{P}(X,\mathcal{F}_X)})(\pi''))(F_X)
            & = \int_{\textbf{P}(X)} i_{F_X} \, d\left( \mathcal{P}( E_{(X,F_X)}) (\pi'') \right) \notag \\
            & = \int_{\textbf{P}(X)} i_{F_X} \, d (\pi''\circ E_{\mathcal{P}(X,\mathcal{F}_X)}^{-1}) , \notag \\
            & = \int_{E_{\mathcal{P}(X,\mathcal{F}_X)}^{-1}(\textbf{P}(X))} i_{F_X} \circ E_{(X,\mathcal{F}_X)} \, d\pi'', \notag \\
            & =  \int_{ \textbf{P}(\textbf{P}(X))} i_{F_X} \circ E_{(X,\mathcal{F}_X)} (\pi') \, \pi''(d\pi'), \notag \\
            & = \int_{ \textbf{P}(\textbf{P}(X))}  (E_{(X,\mathcal{F}_X)} (\pi'))(F_X) \, \pi''(d\pi'). 
        \end{align}
        
        Now, by the definition of $E_{(X,\mathcal{F}_X)}$, we have 
        \[
           (E_{(X,\mathcal{F}_X)} (\pi'))(F_X) = \int_{\textbf{P}(X)} i_{F_X} \, d\pi' = \xi_{i_{F_x}}(\pi'), 
        \]
        from where, together with  \eqref{eqn:asociatividadGiry_01} and by proposition 2 of the lemma~\ref{lema:monadaGirylema}, we obtain that  
        
        \[
            (E_{(X,\mathcal{F}_X)} \circ \mathcal{P}(E_{\mathcal{P}(X,\mathcal{F}_X)})(\pi''))(F_X) = \int_{ \textbf{P}(\textbf{P}(X))}  \xi_{i_{F_X}} (\pi') \, \pi''(d\pi'), 
        \]
        with which, thanks to proposition 3 of the lemma~\ref{lema:monadaGirylema}, it follows that 
        \begin{align}\label{eqn:asociatividadGiry_03}
             (E_{(X,\mathcal{F}_X)} \circ \mathcal{P}(E_{\mathcal{P}(X,\mathcal{F}_X)})(\pi''))(F_X)
             & = \int_{\textbf{P}(X)} i_{F_X} \, d(E_{\mathcal{P}(X,\mathcal{F}_X)}(\pi')), \notag \\
             & = \int_{\textbf{P}(X)} i_{F_X} (\mathbb{P}_X) \, (E_{\mathcal{P}(X,\mathcal{F}_X)}(\pi'))(d\mathbb{P}_X). 
        \end{align}

       On the other hand, by definition of $E$, we have
        \begin{equation}\label{eqn:asociatividadGiry_02}
            (E_{(X,\mathcal{F}_X)} \circ E_{\mathcal{P}(X,\mathcal{F}_X)}(\pi''))(F_X) = \int_{\textbf{P}(X)} i_{F_X}(\mathbb{P}_X) \,  (E_{(X,\mathcal{F}_X)}(\pi'')) d(\mathbb{P}_X).
        \end{equation}
        
        Thus, together with \eqref{eqn:asociatividadGiry_03} y \eqref{eqn:asociatividadGiry_02}, we can conclude that 
        \[
            E_{(X,\mathcal{F}_X)} \circ E_{\mathcal{P}(X,\mathcal{F}_X)}(\pi'')(F_X) = E_{(X,\mathcal{F}_X)}\circ\mathcal{P}(E_{(X,\mathcal{F}_X)})(\pi'')(F_X). 
        \]
\end{enumerate}

\end{observacion}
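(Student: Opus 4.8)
The plan is to establish the three monad laws for the Giry monad by turning each identity of natural transformations into a family of equalities of measures and then testing those measures against an arbitrary set $F_X\in\mathcal{F}_X$. Since two elements of $\textbf{P}(X)$ coincide precisely when they agree on every $F_X\in\mathcal{F}_X$, it is enough to fix a measurable space $(X,\mathcal{F}_X)$, evaluate each composite at an arbitrary input of the relevant iterated Giry space, and compare the resulting real numbers. In every case I would rewrite the value of a composite as an integral of the evaluation function $i_{F_X}$ and then invoke the three parts of Lemma~\ref{lema:monadaGirylema} to collapse that integral; I do not re-derive those parts.

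For the left unit, the component $(\alpha\circ\mathcal{P})_{(X,\mathcal{F}_X)}=\alpha_{\mathcal{P}(X,\mathcal{F}_X)}$ sends $\mathbb{P}_X$ to the Dirac measure $\delta_{\mathbb{P}_X}\in\textbf{P}(\textbf{P}(X))$, so that
\[
    \bigl(E_{(X,\mathcal{F}_X)}(\delta_{\mathbb{P}_X})\bigr)(F_X)=\int_{\textbf{P}(X)} i_{F_X}\,d\delta_{\mathbb{P}_X}=i_{F_X}(\mathbb{P}_X)=\mathbb{P}_X(F_X),
\]
where the middle step is proposition~1 of Lemma~\ref{lema:monadaGirylema} applied on $\textbf{P}(X)$ with $\theta=i_{F_X}$. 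For the right unit, $\mathcal{P}(\alpha_{(X,\mathcal{F}_X)})$ sends $\mathbb{P}_X$ to the pushforward $\mathbb{P}_X\circ\alpha_{(X,\mathcal{F}_X)}^{-1}$; a change of variables turns $\bigl(E_{(X,\mathcal{F}_X)}(\mathbb{P}_X\circ\alpha_{(X,\mathcal{F}_X)}^{-1})\bigr)(F_X)$ into $\int_X i_{F_X}(\delta_x)\,\mathbb{P}_X(dx)$, and since $i_{F_X}(\delta_x)=\delta_x(F_X)=\mymathbb{1}_{F_X}(x)$ this integral equals $\mathbb{P}_X(F_X)$ (again proposition~1, now with $\theta=\mymathbb{1}_{F_X}$). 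Both unit composites thus act as the identity on each $\mathbb{P}_X$, so the two triangles commute.

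The associativity square is the main obstacle, as it is the only law that needs an interchange of the order of integration across two stacked Giry levels; this interchange is exactly what proposition~3 of Lemma~\ref{lema:monadaGirylema} supplies, so the real work lies in matching the types and feeding the lemma the correct integrand. Fixing $\pi''\in\textbf{P}(\textbf{P}(\textbf{P}(X)))$ and $F_X\in\mathcal{F}_X$, I would treat the two branches separately. For the branch $E_{(X,\mathcal{F}_X)}\circ\mathcal{P}(E_{(X,\mathcal{F}_X)})$, I unfold the definitions so that $\mathcal{P}(E_{(X,\mathcal{F}_X)})(\pi'')=\pi''\circ E_{(X,\mathcal{F}_X)}^{-1}$ and then apply the change-of-variables theorem to pull $i_{F_X}$ back along $E_{(X,\mathcal{F}_X)}$, arriving at
\[
    \int_{\textbf{P}(\textbf{P}(X))} i_{F_X}\bigl(E_{(X,\mathcal{F}_X)}(\pi')\bigr)\,\pi''(d\pi')=\int_{\textbf{P}(\textbf{P}(X))}\xi_{i_{F_X}}(\pi')\,\pi''(d\pi'),
\]
where the equality is only the definition of $E_{(X,\mathcal{F}_X)}$ together with that of $\xi$, and where $\xi_{i_{F_X}}$ is measurable by proposition~2 so the outer integral makes sense.

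For the other branch $E_{(X,\mathcal{F}_X)}\circ E_{\mathcal{P}(X,\mathcal{F}_X)}$, I would write its value at $F_X$ as $\int_{\textbf{P}(X)} i_{F_X}\,d\bigl(E_{\mathcal{P}(X,\mathcal{F}_X)}(\pi'')\bigr)$ and apply proposition~3 of Lemma~\ref{lema:monadaGirylema} on the space $\mathcal{P}(X,\mathcal{F}_X)=(\textbf{P}(X),\textbf{P}(\mathcal{F}_X))$ with $\theta=i_{F_X}$, which rewrites it precisely as $\int_{\textbf{P}(\textbf{P}(X))}\xi_{i_{F_X}}\,d\pi''$. Since both branches reduce to the same iterated integral, the square commutes on every $\pi''$ and every $F_X$, and together with the two unit triangles this shows that $(\mathcal{P},\alpha,E)$ is a monad. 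The only genuine subtlety, and the step I would check most carefully, is keeping the four levels $X$, $\textbf{P}(X)$, $\textbf{P}(\textbf{P}(X))$, $\textbf{P}(\textbf{P}(\textbf{P}(X)))$ straight so that proposition~3 is invoked at the right level and the change of variables is applied to a genuinely measurable map, which is guaranteed by the measurability statement already established for $E$.
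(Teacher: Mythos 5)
Your proposal is correct and takes essentially the same route as the paper: both reduce the associativity square, after pushing $\pi''$ forward along $E_{(X,\mathcal{F}_X)}$ via the change-of-variables theorem, to the interchange identity of proposition~3 of Lemma~\ref{lema:monadaGirylema} applied at the level of $\mathcal{P}(X,\mathcal{F}_X)$ with $\theta=i_{F_X}$, with proposition~2 supplying the measurability of $\xi_{i_{F_X}}$. The only difference is that you work out the two unit triangles explicitly from proposition~1, which the paper merely asserts as immediate.
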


\nocite{*}

\end{document}